\theoremstyle{plain}
\newtheorem{theorem}{Theorem}[section]
\newtheorem{lemma}[theorem]{Lemma}
\newtheorem{proposition}[theorem]{Proposition}
\newtheorem{corollary}[theorem]{Corollary}
\theoremstyle{definition}
\newtheorem{example}[theorem]{Example}
\theoremstyle{remark}
\newtheorem{remark}[theorem]{Remark}
\newcommand{\R}{\mathbb{R}}
\newcommand{\N}{\mathbb{N}}
\DeclareMathOperator{\dom}{dom}
\DeclareMathOperator{\ran}{ran}
\DeclareMathOperator{\gra}{gra}
\newcommand{\F}{\mathcal{F}}
\newcommand{\tos}{\rightrightarrows} 
\newcommand{\inner}[2]{\langle #1,#2 \rangle}
\DeclareMathOperator{\cone}{cone}
\DeclareMathOperator{\co}{co}
\DeclareMathOperator{\aff}{aff}
\begin{document}

\title{Remarks on $p$-monotone operators
%
}


\author{Orestes Bueno \and John Cotrina\footnote{Universidad del Pac\'ifico, 
              Av. Salaverry 2020, Lima 11, Per\'u            \texttt{o.buenotangoa@up.edu.pe}~and~\texttt{cotrina\_je@up.edu.pe} }}


\maketitle

\begin{abstract}
In this paper, we deal with three aspects of $p$-monotone operators. First we study $p$-monotone operators with a unique maximal extension (called pre-maximal), and with convex graph. We then deal with linear operators, and provide characterizations of $p$-monotonicity and maximal $p$-monotonicity. Finally we show that the Brezis-Browder theorem preserves $p$-monotonicity in reflexive Banach spaces.
\bigskip

\noindent{\bf Keywords:} $p$-monotone operators, linear operators, Brezis Browder theorem
\end{abstract}

\section{Introduction and definitions}\label{intro}

Let $U,V$ be non-empty sets. A \emph{multivalued operator} $T:U\tos V$ is an application $T:U\to \mathcal{P}(V)$, that is, for $u\in U$, $T(u)\subset V$. 
The \emph{domain}, \emph{range} and \emph{graph} of $T$ are defined, respectively, as
\begin{gather*}
\dom(T)=\big\{u\in U\::\: T(u)\neq\emptyset\big\},\qquad \ran(T)=\bigcup_{u\in U}T(u)\\ 
\text{and }\gra(T)=\big\{(u,v)\in U\times V\::\: v\in T(u)\big\}.
\end{gather*}
From now on, unless otherwise stated, we will identify multivalued operators with their graphs, so we will write $(u,v)\in T$ instead of $(u,v)\in \gra(T)$. 

Let $V$ be a vector space. If $A\subset V$ then $\co(A)$, $\cone(A)$ and $\aff(A)$ respectively denote the \emph{convex} \emph{conic} and \emph{affine hull} of $A$. In addition, $-A$ denotes the set 
\[
-A=\{x\in V\::\: -x\in A\}.
\]
Recall that if $A$ is convex and $0\in A$ then $\aff(A)=\cone(A)-\cone(A)=\cone(A-A)$.

Let $X$ be a Banach space and $X^*$ be its topological dual. 
The \emph{duality product} $\pi:X\times X^*\to\R$ is defined as $\pi(x,x^*)=\inner{x}{x^*}$. 
Let $A\subset X$, we denote 
\[
A^{\bot}=\{x^*\in X^*\::\:\inner{x}{x^*}=0,\,\forall\,x\in A\}.
\]
Given $h:X\times X^*\to \R\cup\{\infty\}$, denote
\begin{align*}
\{h=\pi\}&=\{(x,x^*)\in X\times X^*\::\:h(x,x^*)=\pi(x,x^*)\} \\
\{h\geq \pi\}&=\{(x,x^*)\in X\times X^*\::\:h(x,x^*)\geq\pi(x,x^*)\}\\
\{h\leq \pi\}&=\{(x,x^*)\in X\times X^*\::\:h(x,x^*)\leq\pi(x,x^*)\}
\end{align*}
Clearly $\{h=\pi\}=\{h\geq \pi\}\cap \{h\leq\pi\}$.

A multivalued operator $T:X\tos X^*$ is called \emph{monotone} if, for every pair $(x,x^*)$, $(y,y^*)\in T$, 
\[
\inner{x-y}{x^*-y^*}\geq 0.
\]
Moreover, $T$ is \emph{maximal monotone} if $T$ is not properly contained in a monotone operator. A \emph{maximal monotone  extension} of $T$ is a maximal monotone operator $M$ which contains $T$. It is an easy consequence of Zorn's Lemma that every monotone operator has a maximal monotone extension.  If $T$ possesses a unique maximal monotone extension, then it is called \emph{pre-maximal monotone}.

Let $T:X\tos X^*$ be a multivalued operator. Then $T$ is \emph{linear} (respectively, \emph{affine linear}) if its graph is a linear (respectively, affine) subspace of $X\times X^*$. It is easy to verify that if $T$ is linear, then it is monotone if, and only if, for every $(x,x^*)\in T$, $\inner{x}{x^*}\geq 0$.

\section{The $p$-monotone polar}
A multivalued operator $T:X\tos X^*$ is called \emph{$p$-monotone}, with $p\in \N$, if  
\[
\{(x_i,x_i^*)\}_{i=0}^p\subset T\quad\longrightarrow\quad\sum_{i=0}^p\langle x_{i+1}-x_i,x_i^*\rangle\leq 0,
\]
where $x_{p+1}=x_0$. An operator is called \emph{cyclically monotone} if it is $p$-monotone, for all $p\in\N$.
If for all $p$-monotone (respectively, cyclically monotone) operators $S$ with $S\supset T$ we have $S=T$, then $T$ is called \emph{maximal $p$-monotone} 
(respectively, maximal cyclically monotone).

Clearly when $p=1$ we recover the classical definition of monotonicity. Also, if an operator is $p$-monotone then it is $q$-monotone, for all $q\leq p$, and in particular, it is monotone.

The \emph{Fitzpatrick function of order $p$}~\cite{BBBRW07} of a multivalued operator $T:X\tos X^*$ is the function 
$\F_{T,p}:X\times X^*\to\R\cup\{\infty\}$ defined for each $(x_0,x_0^*)\in X\times X^*$ as
\[
\F_{T,p}(x_0,x_0^*)=\sup_{\{(x_i,x_i^*)\}_{i=1}^p\subset T} \sum_{i=0}^{p}\inner{x_{i+1}-x_i}{x_i^*}+\inner{x_0}{x_0^*}
\]
considering $x_{p+1}=x_0$.  
In addition, the \emph{Fitzpatrick function of infinite order} is defined as $\F_{T,\infty}=\displaystyle\sup_{p\in\N}\F_{T,p}$.
Note that the Fitzpatrick function of order $1$ is the classical Fitzpatrick function of $T$.

\begin{proposition}[{\cite[Proposition~2.3]{BBBRW07}}]\label{pro:21}\quad
\begin{enumerate}
\item $\F_{T,p}$ is convex and lower semicontinuous.
\item For all $(x,x^*)\in T$, $\F_{T,p}(x,x^*)\geq \inner{x}{x^*}$. Equivalently, $T\subset \{\F_{T,p}\geq \pi\}$.
\end{enumerate}
\end{proposition}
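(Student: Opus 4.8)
The plan is to exploit the observation that, for each fixed choice of the $p$ points $\{(x_i,x_i^*)\}_{i=1}^p\subset T$ over which the supremum is taken, the expression inside the supremum is an \emph{affine} function of the free variable $(x_0,x_0^*)$. Once this is established, part~1 is immediate: $\F_{T,p}$ is then a pointwise supremum of affine (hence convex, continuous, and lower semicontinuous) functions, and any such supremum is itself convex and lower semicontinuous.

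The crux is to isolate how the summand depends on $(x_0,x_0^*)$. In the sum $\sum_{i=0}^{p}\inner{x_{i+1}-x_i}{x_i^*}$, the only indices that involve $x_0$ or $x_0^*$ are $i=0$ and $i=p$, since $x_{p+1}=x_0$. First I would expand the $i=0$ term as $\inner{x_1-x_0}{x_0^*}=\inner{x_1}{x_0^*}-\inner{x_0}{x_0^*}$ and the $i=p$ term as $\inner{x_0-x_p}{x_p^*}=\inner{x_0}{x_p^*}-\inner{x_p}{x_p^*}$. The seemingly problematic bilinear term $-\inner{x_0}{x_0^*}$ cancels exactly against the additive $+\inner{x_0}{x_0^*}$ appended in the definition of $\F_{T,p}$. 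What survives is $\inner{x_1}{x_0^*}+\inner{x_0}{x_p^*}$, which is linear in $(x_0,x_0^*)$, plus a constant $\sum_{i=1}^{p-1}\inner{x_{i+1}-x_i}{x_i^*}-\inner{x_p}{x_p^*}$ depending only on the fixed points. This cancellation is the main point of the whole argument; after it, convexity and lower semicontinuity follow with no further effort.

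For part~2 I would simply evaluate the supremum at one admissible choice. Given $(x,x^*)\in T$, set $(x_0,x_0^*)=(x,x^*)$ and take the constant sequence $(x_i,x_i^*)=(x,x^*)$ for every $i=1,\dots,p$; this is feasible precisely because $(x,x^*)\in T$. Then every difference $x_{i+1}-x_i$ vanishes, so $\sum_{i=0}^{p}\inner{x_{i+1}-x_i}{x_i^*}=0$ and the bracketed quantity reduces to $\inner{x_0}{x_0^*}=\inner{x}{x^*}$. Since $\F_{T,p}(x,x^*)$ dominates this particular value, we obtain $\F_{T,p}(x,x^*)\geq\inner{x}{x^*}$, and the inclusion $T\subset\{\F_{T,p}\geq\pi\}$ is just a restatement through the notation $\{h\geq\pi\}$ with $h=\F_{T,p}$. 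I expect no serious obstacle here; the only delicate step in the proposition is the bookkeeping of the cancellation in the second paragraph, and everything else is routine convex analysis.
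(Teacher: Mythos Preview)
Your argument is correct. The cancellation you identify is exactly the reason the inner expression is affine in $(x_0,x_0^*)$, and once that is observed, convexity and lower semicontinuity follow from the standard fact that a pointwise supremum of continuous affine functions is convex and lower semicontinuous. For part~2, your choice of the constant family $(x_i,x_i^*)=(x,x^*)$ is the natural one and does the job.

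Note, however, that the paper does not supply its own proof of this proposition: it is quoted verbatim as \cite[Proposition~2.3]{BBBRW07} and used as a black box. So there is no ``paper's proof'' to compare against here; your write-up is simply a self-contained verification of the cited result, and it matches the standard argument one finds in the literature on Fitzpatrick functions.
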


Using the aforementioned Fitzpatrick functions, we define the \emph{$p$-monotone polar} (or simply, \emph{$p$-polar}) of a multivalued operator $T:X\tos X^*$ as
\[
T^{\mu_p}=\{{\F_{T,p}}\leq \pi\}=\{(x,x^*)\in X\times X^*\::\:\F_{T,p}(x,x^*)\leq \inner{x}{x^*}\}.  
\]
In addition, the \emph{cyclically monotone polar} (or \emph{cyclic polar}) of $T$ is the operator $T^{\mu_\infty}=\displaystyle\bigcap_{p\in\N}T^{\mu_p}$. 
Since for each $x$, the map $x^*\mapsto \F_{T,p}(x,x^*)-\inner{x}{x^*}$ is also convex and weak$^*$-lower semicontinuous, 
$T^{\mu_p}(x)$ is convex and weak$^*$-closed. This also holds when $p=\infty$.

The following characterization of the $p$-polars is immediate.
\begin{proposition}\label{pro:caracpol}
Let $T:X\tos X^*$ be a multivalued operator. Then
\[
T^{\mu_p}=\left\{(x_0,x_0^*)\::\: \sum_{i=0}^{p} \inner{x_{i+1}-x_i}{x_i^*}\leq 0,\,\forall\,\{(x_i,x_i^*)\}_{i=1}^p\subset T\text{ with }x_{p+1}=x_0\right\}.
\]
\end{proposition}
The name ``polar'' is motivated from the fact that, when $p=1$, the $1$-cyclic monotone polar $T^{\mu_1}$ is simply the well known monotone polar $T^{\mu}$~\cite{
BSML}.  However, these ``polars'' are not \emph{polarities} in the sense of~\cite{Birk79}, and do not have the same properties the monotone polar (and also similar notions, such as the quasimonotone and pseudomonotone polars~ \cite{BueCot2017,BueCot16-2}) has.  The following proposition subsumes some of those properties.

\begin{proposition}\label{p-monotone-relation}
Let $T,T_i:X\tos X^*$ be multivalued operators and $p\in\N$. The following hold.
\begin{enumerate}
 \item $T^{\mu_{p+1}}\subset T^{\mu_p}$;
 \item If $T_1\subset T_2$ then $T_2^{\mu_p}\subset T_1^{\mu_p}$;
 \item $\displaystyle\left(\bigcup_{i\in I}T_i\right)^{\mu_p}\subset \bigcap_{i\in I}T_i^{\mu_p}$;
 \item The graph of $T^{\mu_p}$ is (strongly-)closed.
 \item Let $C=\co(\dom(T))$. If $T$ is $p$-monotone then $T+N_C\subset T^{\mu_p}$.
 \item For any $(x,x^*)\in X\times X^*$,  $(T+(x,x^*))^{\mu_p}=T^{\mu_p}+(x,x^*)$.
\end{enumerate}
\end{proposition}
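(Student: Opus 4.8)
The plan is to work throughout with the explicit description of the polar furnished by Proposition~\ref{pro:caracpol}, reducing every claim to an inequality about the cyclic sum $S_p(x_0,x_0^*;\{(x_i,x_i^*)\}_{i=1}^p)=\sum_{i=0}^p\langle x_{i+1}-x_i,x_i^*\rangle$ (with $x_{p+1}=x_0$), while appealing to Proposition~\ref{pro:21} for the analytic item. Items (2) and (3) are then formal. Since $\mathcal{F}_{T,p}$ is a supremum over test-tuples drawn from $T$, enlarging $T$ only enlarges the feasible set, so $\mathcal{F}_{T_1,p}\leq\mathcal{F}_{T_2,p}$ whenever $T_1\subset T_2$, whence $T_2^{\mu_p}=\{\mathcal{F}_{T_2,p}\leq\pi\}\subset\{\mathcal{F}_{T_1,p}\leq\pi\}=T_1^{\mu_p}$; and (3) follows by applying (2) to each inclusion $T_j\subset\bigcup_i T_i$ and intersecting. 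For (4) I would invoke Proposition~\ref{pro:21}(1): $\mathcal{F}_{T,p}$ is lower semicontinuous and $\pi$ is strongly continuous, so $\mathcal{F}_{T,p}-\pi$ is lower semicontinuous and its $0$-sublevel set $T^{\mu_p}$ is strongly closed.

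For (1) the key device is point-duplication. Given $(x_0,x_0^*)\in T^{\mu_{p+1}}$ and any test-tuple $\{(x_i,x_i^*)\}_{i=1}^p\subset T$, I would form a $(p+1)$-tuple by repeating the last entry, setting $(x_{p+1},x_{p+1}^*)=(x_p,x_p^*)$ and $x_{p+2}=x_0$. A short computation shows the inserted summand $\langle x_{p+1}-x_p,x_p^*\rangle$ vanishes and the remaining $(p+1)$-sum collapses exactly to the $p$-sum $S_p$; since the former is $\leq 0$ by hypothesis, so is the latter, giving $(x_0,x_0^*)\in T^{\mu_p}$.

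For (5), take $(x_0,x_0^*)\in T$ and $z_0^*\in N_C(x_0)$; I must verify the defining inequality of $T^{\mu_p}$ at the point $(x_0,x_0^*+z_0^*)$. The idea is to split the zeroth summand as $\langle x_1-x_0,x_0^*\rangle+\langle x_1-x_0,z_0^*\rangle$. Appending $(x_0,x_0^*)$ as a genuine extra entry turns the first part together with the remaining summands into the cyclic sum of a $(p+1)$-tuple lying in $T$, which is $\leq 0$ by $p$-monotonicity of $T$; and since $x_1\in\dom(T)\subset C$, the normal-cone inequality gives $\langle x_1-x_0,z_0^*\rangle\leq 0$. Adding the two estimates closes the argument. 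Finally, (6) rests on a telescoping cancellation: writing the translated test-points as $(y_i+x,y_i^*+x^*)$ with $(y_i,y_i^*)\in T$, the differences $z_{i+1}-z_i=y_{i+1}-y_i$ are unaffected by the translation, and the extra contribution $\sum_{i=0}^p\langle y_{i+1}-y_i,x^*\rangle=\langle\sum_{i=0}^p(y_{i+1}-y_i),x^*\rangle$ telescopes to $\langle 0,x^*\rangle=0$; hence the cyclic sum is invariant, the polar condition for $T+(x,x^*)$ at $(z_0,z_0^*)$ coincides with that for $T$ at $(z_0-x,z_0^*-x^*)$, and one obtains equality rather than mere inclusion because translation is a bijection.

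I expect (5) to be the only step needing genuine care, as it is the sole place where the hypothesis of $p$-monotonicity of $T$ is actually used and where one must correctly pair the normal-cone inequality with the ``free'' index $x_1$; the remaining items are formal consequences of Propositions~\ref{pro:21} and~\ref{pro:caracpol}.
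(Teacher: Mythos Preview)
Your proposal is correct and follows essentially the same route as the paper: point-duplication for (1), monotonicity of $\mathcal{F}_{T,p}$ in $T$ for (2) and hence (3), splitting the zeroth summand and adding the $p$-monotonicity inequality to the normal-cone inequality for (5), and telescoping for (6). The only cosmetic difference is in (4), where you invoke lower semicontinuity of $\mathcal{F}_{T,p}$ from Proposition~\ref{pro:21} while the paper cites the explicit description in Proposition~\ref{pro:caracpol}; both are immediate.
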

\begin{proof}\quad 
\begin{enumerate}
\item Take $(x_0,x_0^*)\in T^{\mu_{p+1}}$ and let $\{(x_i,x_i^*)\}_{i=1}^p\subset T$ and $x_{p+1}=x_0$. 
Thus
\begin{align*}
\sum_{i=0}^p\inner{x_{i+1}-x_i}{x_i^*}&=\sum_{i=0}^{p-1}\inner{x_{i+1}-x_i}{x_i^*}+\inner{x_0-x_p}{x_p^*}\\
&=\sum_{i=0}^{p-1}\inner{x_{i+1}-x_i}{x_i^*}+\inner{x_p-x_p}{x_p^*}+\inner{x_0-x_p}{x_p^*}\leq 0.
\end{align*}
This implies $(x_0,x_0^*)\in T^{\mu_{p}}$
\item If $T_1\subset T_2$ then $\F_{T_1,p}\leq \F_{T_2,p}$. Hence, $\{\F_{T_2,p}\leq \pi\}\subset \{\F_{T_1,p}\leq \pi\}$.
\item It follows directly from {\it 3}.
\item It is an easy consequence from Proposition~\ref{pro:caracpol}.
\item Let $x\in\dom(T)$ and take $x^*\in T(x)$ and $y^*\in N_C(x)$. Then, for each $\{(x_i,x_i^*)\}_{i=1}^p\subset T$, we have 
\begin{align}\label{p-mono-T}
\langle x_1-x,x^* \rangle+\sum_{i=1}^{p}\langle x_{i+1}-x_i,x_i^*\rangle\leq0,\\\label{normal-cone}
\langle x_1-x,y^*\rangle\leq0,
\end{align}
where $x_{p+1}=x$. We now add \eqref{p-mono-T} and \eqref{normal-cone} to obtain
\[
\langle x_1-x,x^*+y^* \rangle+\sum_{i=1}^{p}\langle x_{i+1}-x_i,x_i^*\rangle\leq0. 
\]
Therefore, $x^*+y^*\in T^{\mu_p}(x)$.
\item It is a straightforward calculation using the definition of $p$-polar.
\qedhere
\end{enumerate}
\end{proof}

We must remark that, when $p=1$, the converse inclusion in item {\it 4} of the previous proposition also holds~\cite{BSML}. 

\begin{example}\label{p=>1}
Let $(u,u^*)\in X\times X^*$. Then all the $p$-monotone polars of $\{(u,u^*)\}$ coincide with its monotone polar, that is
\[
\{(u,u^*)\}^{\mu}=\{(u,u^*)\}^{\mu_p}=\{(u,u^*)\}^{\mu_{\infty}}.
\]
More generally, if $T$ has exactly $p$ elements then $T^{\mu_p}=T^{\mu_{p+1}}=\cdots=T^{\mu_\infty}$.
\end{example}

%

In~\cite{BBBRW07}, the authors showed that a multivalued operator $T$ is $p$-monotone if, 
and only if, $T\subset T^{\mu_p}$. Also, a multivalued operator $T$ is maximal $p$-monotone if, and only if, $T=T^{\mu_p}$. 
So, all maximal $p$-monotone operator has convex and weak$^*$-closed values.

We now deal with maximal $p$-monotone extensions of a $p$-monotone operator.  Given $T:X\tos X^*$ $p$-monotone, 
let $M_p(T)$ be the family of its maximal $p$-monotone extensions.

\begin{proposition}\label{polar+maximal-p}
Let $T:X\tos X^*$ be a multivalued operator. If $T$ is $p$-monotone then the following hold:
\begin{enumerate}
 \item $T^{\mu_p}=\displaystyle\bigcup_{S\in M_p(T)}S$,
 \item $T^{\mu_p\mu_p}\subset\displaystyle\bigcap_{S\in M_p(T)}S$,
 \item $T^{\mu_p\mu_p}$ is $p$-monotone.
\end{enumerate}
\end{proposition}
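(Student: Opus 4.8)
The plan is to prove the three items in order, deriving items 2 and 3 from item 1. For item 1 I would argue by double inclusion. The inclusion $\bigcup_{S\in M_p(T)}S\subseteq T^{\mu_p}$ is the easy direction: any $S\in M_p(T)$ satisfies $T\subseteq S$, so by the antitonicity in Proposition~\ref{p-monotone-relation}(2) we get $S^{\mu_p}\subseteq T^{\mu_p}$; since $S$ is maximal $p$-monotone we have $S=S^{\mu_p}$ (the characterization of maximal $p$-monotonicity recalled from~\cite{BBBRW07}), whence $S\subseteq T^{\mu_p}$. For the reverse inclusion, given $(x,x^*)\in T^{\mu_p}$ I would first show that $T\cup\{(x,x^*)\}$ is again $p$-monotone, and then invoke Zorn's Lemma to extend it to a maximal $p$-monotone $S\supseteq T\cup\{(x,x^*)\}$; such an $S$ lies in $M_p(T)$ and contains $(x,x^*)$. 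The Zorn argument is identical to the monotone case: the union of a chain of $p$-monotone operators is $p$-monotone, since any $p+1$ chosen points already lie in a single member of the chain.

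The main obstacle is precisely the claim that $(x,x^*)\in T^{\mu_p}$ forces $T\cup\{(x,x^*)\}$ to be $p$-monotone. The definition of $p$-monotonicity quantifies over cycles of $p+1$ points, and such a cycle may use the new point $(x,x^*)$ \emph{several} times, whereas the membership $(x,x^*)\in T^{\mu_p}$ (via Proposition~\ref{pro:caracpol}) only controls cycles passing through $(x,x^*)$ exactly once. My plan to bridge this gap is an \emph{arc decomposition}. Take an arbitrary cycle $(y_0,y_0^*),\dots,(y_p,y_p^*)$ drawn from $T\cup\{(x,x^*)\}$ and observe that the cyclic sum $\sum_{i=0}^p\inner{y_{i+1}-y_i}{y_i^*}$ is invariant under cyclic shifts. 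If no $y_i$ equals $(x,x^*)$, the sum is nonpositive by $p$-monotonicity of $T$. Otherwise, cut the cycle at the positions where $(x,x^*)$ occurs, splitting it into arcs, each of which is a cycle through $(x,x^*)$ exactly once together with $n_j$ points of $T$, where $\sum_j n_j\leq p$. Since each $n_j\leq p$, the nesting $T^{\mu_p}\subseteq T^{\mu_{n_j}}$ from Proposition~\ref{p-monotone-relation}(1), together with $(x,x^*)\in T^{\mu_p}$ and Proposition~\ref{pro:caracpol}, makes each arc-sum nonpositive, so the total is nonpositive. This establishes the claim and completes item 1.

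For item 2 I would apply the $p$-polar to the identity of item 1 and use Proposition~\ref{p-monotone-relation}(3):
\[
T^{\mu_p\mu_p}=\Big(\bigcup_{S\in M_p(T)}S\Big)^{\mu_p}\subseteq\bigcap_{S\in M_p(T)}S^{\mu_p}=\bigcap_{S\in M_p(T)}S,
\]
the last equality again because $S=S^{\mu_p}$ for maximal $p$-monotone $S$. Item 3 is then immediate from item 2: fixing any $S_0\in M_p(T)$ (nonempty by the Zorn argument above), we have $T^{\mu_p\mu_p}\subseteq\bigcap_S S\subseteq S_0$, and any subset of a $p$-monotone operator is $p$-monotone because every cycle in the subset is a cycle in $S_0$. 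I would emphasize that one cannot instead use the cruder bound $T^{\mu_p\mu_p}\subseteq T^{\mu_p}$, since $T^{\mu_p}$ is a \emph{union} of $p$-monotone operators and need not be $p$-monotone; passing through the \emph{intersection} in item 2 is exactly what makes item 3 work.
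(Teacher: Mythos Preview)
Your proof is correct and follows the same overall strategy as the paper's (double inclusion plus Zorn for item~1, then items~2 and~3 via antitonicity and $S=S^{\mu_p}$), but is considerably more explicit: your arc-decomposition argument supplies the justification that $T\cup\{(x,x^*)\}$ is $p$-monotone whenever $(x,x^*)\in T^{\mu_p}$, a step the paper's one-line proof (``follows from Zorn's Lemma'') simply takes for granted. One small remark: in the decomposition treat the case $n_j=0$ (two consecutive occurrences of $(x,x^*)$) separately, since $T^{\mu_0}$ is undefined---but there the arc-sum is $\langle x-x,x^*\rangle=0$, so nothing is lost.
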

\begin{proof}
\begin{enumerate}
 \item The result follows from the fact that any $p$-monotone operator 
has a maximal $p$-monotone extension, due to Zorn's Lemma.
\item  Let $S$ a maximal $p$-monotone  extension of $T$. By item~{\it 1}, $S\subset T^{\mu_p}$ and, since $S$ is maximal $p$-monotone, 
$T^{\mu_p\mu_p}\subset S^{\mu_p}=S$. Thus the inclusion follows.
\item Follows from item {\it 2} and by noting that $\displaystyle\bigcap_{S\in M_p(T)}S$ is $p$-monotone.\qedhere
\end{enumerate}
\end{proof}

In \cite{BSML}, the authors showed that if $T$ is monotone then $T\subset T^{\mu_1\mu_1}$, but this is not true in general for $p>1$, consider for instance 
$T=\{(0,0)\}$, which is $2$-monotone and, by Example \ref{p=>1},  
$T^{\mu_2}=T^{\mu_1}=\{(x,y)\in\R^2\::\: xy\geq 0\}$. However, $T^{\mu_2\mu_2}=\emptyset$. 

\begin{proposition}
Let $T:X\tos X^*$ be a multivalued operator. If $T$ is $p$-monotone then 
$T^{\mu_p\mu_p}=\{\F_{T^{\mu_p},p}=\pi\}$. 
\end{proposition}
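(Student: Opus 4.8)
The plan is to reduce the claimed equality to a single set inclusion and then obtain that inclusion by combining the characterization of $p$-monotonicity with the antitone behaviour of the $p$-polar. By definition of the $p$-polar, $T^{\mu_p\mu_p}=(T^{\mu_p})^{\mu_p}=\{\F_{T^{\mu_p},p}\leq\pi\}$. Since $\{\F_{T^{\mu_p},p}=\pi\}=\{\F_{T^{\mu_p},p}\geq\pi\}\cap\{\F_{T^{\mu_p},p}\leq\pi\}$, the inclusion $\{\F_{T^{\mu_p},p}=\pi\}\subset\{\F_{T^{\mu_p},p}\leq\pi\}=T^{\mu_p\mu_p}$ is immediate. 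Hence the whole statement reduces to proving $T^{\mu_p\mu_p}\subset\{\F_{T^{\mu_p},p}\geq\pi\}$.

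To establish this, I would first apply Proposition~\ref{pro:21}(2) to the operator $T^{\mu_p}$ itself, which yields $T^{\mu_p}\subset\{\F_{T^{\mu_p},p}\geq\pi\}$. It therefore suffices to show $T^{\mu_p\mu_p}\subset T^{\mu_p}$, after which chaining the two inclusions closes the argument.

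The crux of the proof, and the only place where the hypothesis is used, is exactly this containment $T^{\mu_p\mu_p}\subset T^{\mu_p}$. Since $T$ is $p$-monotone, the characterization of~\cite{BBBRW07} recalled above gives $T\subset T^{\mu_p}$. Applying the antitone property Proposition~\ref{p-monotone-relation}(2) with $T_1=T$ and $T_2=T^{\mu_p}$ then yields $(T^{\mu_p})^{\mu_p}\subset T^{\mu_p}$, that is, $T^{\mu_p\mu_p}\subset T^{\mu_p}$, as required.

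I do not expect a genuine obstacle here: the result is essentially an assembly of three previously established facts (the definition of the polar, Proposition~\ref{pro:21}(2) applied to $T^{\mu_p}$, and the antitone property together with the characterization $T$ is $p$-monotone $\iff T\subset T^{\mu_p}$). The one point deserving care is that $T^{\mu_p\mu_p}\subset T^{\mu_p}$ fails for general operators and relies essentially on $p$-monotonicity through $T\subset T^{\mu_p}$; the failure of $T\subset T^{\mu_p\mu_p}$ for $p>1$ noted just before the statement shows that these bipolar relations are delicate, so it is worth being explicit about precisely where the hypothesis enters.
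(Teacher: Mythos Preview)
Your proof is correct and follows essentially the same route as the paper: both argue that $\{\F_{T^{\mu_p},p}=\pi\}\subset\{\F_{T^{\mu_p},p}\leq\pi\}=T^{\mu_p\mu_p}$ is immediate, and for the reverse inclusion use $T\subset T^{\mu_p}$ (from $p$-monotonicity) together with the antitone property to get $T^{\mu_p\mu_p}\subset T^{\mu_p}$, and then Proposition~\ref{pro:21}(2) applied to $T^{\mu_p}$ to obtain $T^{\mu_p}\subset\{\F_{T^{\mu_p},p}\geq\pi\}$.
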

\begin{proof}
First note that $\{\F_{T^{\mu_p},p}=\pi\}\subset \{\F_{T^{\mu_p},p}\leq\pi\}=T^{\mu_p\mu_p}$. 
On the other hand, $T\subset T^{\mu_p}$ so $T^{\mu_p\mu_p}\subset T^{\mu_p}$.  By proposition~\ref{pro:21}, item {\it 2},
\[
T^{\mu_p\mu_p}\subset T^{\mu_p}\subset \{\F_{T^{\mu_p},p}\geq \pi\},
\]
and the proposition follows.
\end{proof}

The following result is analogous to Proposition 24 in \cite{BSML}.
\begin{proposition}
Let $T:X\tos X^*$ be a multivalued operator such that $T\subset T^{\mu_p\mu_p}$. 
If there exists a $p$-monotone operator $S:X\tos X^*$  such that $\F_{T,p}=\F_{S,p}$
then $T$ is $p$-monotone.
\end{proposition}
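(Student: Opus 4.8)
The plan is to reduce everything to the characterization, recalled just before the statement, that an operator is $p$-monotone precisely when it is contained in its own $p$-polar. Thus it suffices to establish the single inclusion $T\subset T^{\mu_p}$, after which $p$-monotonicity of $T$ follows from the result of~\cite{BBBRW07}.

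First I would exploit the only way the two hypotheses can interact, namely through the polar. Since the $p$-polar is by definition a sublevel set, $T^{\mu_p}=\{\F_{T,p}\leq\pi\}$ and $S^{\mu_p}=\{\F_{S,p}\leq\pi\}$, the equality $\F_{T,p}=\F_{S,p}$ immediately yields $T^{\mu_p}=S^{\mu_p}$. Applying the polar operation once more to these equal sets gives $T^{\mu_p\mu_p}=S^{\mu_p\mu_p}$. This is the decisive move: it transfers the bipolar of $T$ onto the bipolar of the genuinely $p$-monotone operator $S$, so that the $p$-monotonicity of $S$ can be brought to bear.

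Next I would use that $p$-monotonicity of $S$. By the same characterization, $S\subset S^{\mu_p}$, and applying the inclusion-reversing property (item \emph{2} of Proposition~\ref{p-monotone-relation}) produces $S^{\mu_p\mu_p}=(S^{\mu_p})^{\mu_p}\subset S^{\mu_p}$. Combining this with the hypothesis $T\subset T^{\mu_p\mu_p}$ and the two polar equalities from the previous step yields the chain
\[
T\subset T^{\mu_p\mu_p}=S^{\mu_p\mu_p}\subset S^{\mu_p}=T^{\mu_p},
\]
which is exactly the inclusion sought.

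There is no serious obstacle here; the argument is a short chase through the order-reversing behaviour of the polar, all of whose ingredients are already licensed by Proposition~\ref{p-monotone-relation} and the characterization of $p$-monotonicity. The only point requiring care is the very first identification $T^{\mu_p}=S^{\mu_p}$: it relies on the polar being nothing more than a sublevel set of the Fitzpatrick function, so the full strength of the equality $\F_{T,p}=\F_{S,p}$ is used, but only at that single step. Everything afterward is formal manipulation of inclusions, and the hypothesis $T\subset T^{\mu_p\mu_p}$ is precisely what is needed to anchor the left end of the chain.
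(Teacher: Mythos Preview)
Your proof is correct and follows essentially the same approach as the paper: from $\F_{T,p}=\F_{S,p}$ obtain $T^{\mu_p}=S^{\mu_p}$, take polars once more, use the $p$-monotonicity of $S$ to get $S^{\mu_p\mu_p}\subset S^{\mu_p}$, and chain to conclude $T\subset T^{\mu_p}$. The paper's argument is the same chain of inclusions, just written more tersely.
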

\begin{proof}
Clearly, $T^{\mu_p}=\{\F_{T,p}\leq \pi\}=\{\F_{S,p}\leq\pi\}=S^{\mu_p}$. Thus,
\[
T\subset T^{\mu_p\mu_p}=S^{\mu_p\mu_p}\subset S^{\mu_p}=T^{\mu_p}
\]
and, therefore, $T$ is $p$-monotone.
\end{proof}

In~\cite[Example~2.16]{BBBRW07} (see also~\cite{Bauschke08}), the authors showed an example of a 
maximal $2$-monotone operator which is not maximal monotone. This means that, in general, 
maximal $p$-monotonicity does not imply maximal $q$-monotonicity when $q<p$. The next proposition studies the case when $q\geq p$.

\begin{proposition}\label{pro:pmonqmon}
Let $T:X\tos X^*$ be a maximal $p$-monotone operator.  The following hold.
\begin{enumerate}
\item $T^{\mu_{q}}$ is $q$-monotone for all $q\geq p$.
\item If $q\geq p$ and $T$ is $q$-monotone then $T$ is maximal $q$-monotone. 
\end{enumerate}
\end{proposition}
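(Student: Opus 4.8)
The plan is to reduce both items to the characterizations recalled after Proposition~\ref{polar+maximal-p}: an operator $S$ is $q$-monotone exactly when $S\subset S^{\mu_q}$, and maximal $q$-monotone exactly when $S=S^{\mu_q}$. The single fact that drives everything is the inclusion $T^{\mu_q}\subset T$, valid for every $q\geq p$. To obtain it, I would first iterate item~1 of Proposition~\ref{p-monotone-relation}, which gives the descending chain $T^{\mu_q}\subset T^{\mu_{q-1}}\subset\cdots\subset T^{\mu_p}$ whenever $q\geq p$; since $T$ is assumed maximal $p$-monotone, the characterization yields $T^{\mu_p}=T$, and hence $T^{\mu_q}\subset T$.

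For item~1, I would feed this inclusion into the antitonicity of the polar, item~2 of Proposition~\ref{p-monotone-relation}, applied with order $q$. Since taking $\mu_q$-polars reverses inclusions, $T^{\mu_q}\subset T$ becomes $T^{\mu_q}\subset (T^{\mu_q})^{\mu_q}=T^{\mu_q\mu_q}$, which is exactly the statement that $T^{\mu_q}$ is $q$-monotone. For item~2, I would combine the same inclusion $T^{\mu_q}\subset T$ with the extra hypothesis that $T$ is $q$-monotone: the latter means $T\subset T^{\mu_q}$, so the two inclusions together force $T=T^{\mu_q}$, which is precisely maximal $q$-monotonicity.

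There is no deep analytic difficulty here; the whole argument is a bookkeeping of inclusions. The point requiring care is the preliminary step: item~1 of Proposition~\ref{p-monotone-relation} is stated only for consecutive orders, so I must iterate it to reach an arbitrary $q\geq p$, and I must apply the antitonicity in the correct direction. The conceptual crux is recognizing that maximal $p$-monotonicity collapses $T^{\mu_p}$ onto $T$, thereby upgrading the generic nesting $T^{\mu_q}\subset T^{\mu_p}$ into the workable inclusion $T^{\mu_q}\subset T$ that settles both items at once.
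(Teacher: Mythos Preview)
Your proposal is correct and follows essentially the same route as the paper: establish $T^{\mu_q}\subset T^{\mu_p}=T$ from the nesting of polars and maximal $p$-monotonicity, then use antitonicity of the $q$-polar to get $T^{\mu_q}\subset T^{\mu_q\mu_q}$ for item~1, and combine with $T\subset T^{\mu_q}$ for item~2. The only cosmetic difference is that the paper cites item~2 of Proposition~\ref{p-monotone-relation} for the chain $T^{\mu_q}\subset T^{\mu_p}$ (likely a slip for item~1), whereas you explicitly iterate item~1, which is the cleaner justification.
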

\begin{proof}\quad
\begin{enumerate}
\item By item~{\it 2} of Proposition~\ref{p-monotone-relation}, for each $q\geq p$, we have $T^{\mu_{q}}\subset T^{\mu_p}=T$. 
Taking $q$-monotone polars and again by Proposition~\ref{p-monotone-relation}, we obtain $T^{\mu_q}\subset T^{\mu_q\mu_q}$, 
that is $T^{\mu_q}$ is $q$-monotone. 
\item If $T$ is $q$-monotone, with $q\geq p$, then 
\[
T\subset T^{\mu_q}\subset T^{\mu_p}=T,
\]
and we are done.\qedhere
\end{enumerate}
\end{proof}


\subsection{Pre-maximal $p$-monotone operators}
If $T$ is a $p$-monotone operator such that $T^{\mu_p}$ is also $p$-monotone 
(therefore, maximal $p$-monotone), then $T$ will be called \emph{pre-maximal $p$-monotone}. A pre-maximal $p$-monotone operator 
is not necesarily maximal, but possesses a unique maximal $p$-monotone extension.

From Proposition~\ref{polar+maximal-p}, we easily obtain the following corollary.
\begin{corollary}\label{cor:premax}
	If $T^{\mu_p\mu_p}$ is maximal $p$-monotone then $T^{\mu_p\mu_p}=T^{\mu_p}$, $T$ is pre-maximal monotone and $T^{\mu_p}$ is its unique maximal monotone extension.
\end{corollary}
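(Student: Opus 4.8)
The plan is to extract everything from Proposition~\ref{polar+maximal-p}, which applies because $T$ is $p$-monotone (this is the operative hypothesis; it guarantees $M_p(T)\neq\emptyset$ via Zorn's Lemma). The guiding observation is that items~1 and~2 of that proposition sandwich every maximal $p$-monotone extension: for each $S\in M_p(T)$ one has
\[
T^{\mu_p\mu_p}\subset S\subset T^{\mu_p},
\]
and the assumption that the lower bound $T^{\mu_p\mu_p}$ is itself maximal $p$-monotone will force this sandwich to collapse.

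First I would fix an arbitrary $S\in M_p(T)$. Item~2 of Proposition~\ref{polar+maximal-p} gives $T^{\mu_p\mu_p}\subset S$. Since $S$ is $p$-monotone and $T^{\mu_p\mu_p}$ is, by hypothesis, maximal $p$-monotone, this inclusion cannot be proper, so $S=T^{\mu_p\mu_p}$. As $S$ was arbitrary and $M_p(T)$ is non-empty, this shows $M_p(T)=\{T^{\mu_p\mu_p}\}$; in other words, $T$ has exactly one maximal $p$-monotone extension, namely $T^{\mu_p\mu_p}$.

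Feeding this back into item~1 of Proposition~\ref{polar+maximal-p} yields
\[
T^{\mu_p}=\bigcup_{S\in M_p(T)}S=T^{\mu_p\mu_p},
\]
which is the first assertion. Since $T^{\mu_p}=T^{\mu_p\mu_p}$ is maximal $p$-monotone, it is in particular $p$-monotone; by the definition of pre-maximality this says precisely that $T$ is pre-maximal $p$-monotone, and the unique maximal $p$-monotone extension found above is exactly $T^{\mu_p}$.

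No serious obstacle is expected---this genuinely is a corollary, with all the substance residing in Proposition~\ref{polar+maximal-p}. The only point demanding care is the collapsing step: one must apply maximality in the correct direction, using that it is the \emph{smaller} operator $T^{\mu_p\mu_p}$ that is maximal, so that any $p$-monotone operator containing it must equal it. One should also keep in mind that $p$-monotonicity of $T$ is what makes $M_p(T)$ non-empty and Proposition~\ref{polar+maximal-p} applicable in the first place.
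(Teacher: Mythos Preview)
Your argument is correct and is precisely the intended one: the paper gives no explicit proof, merely remarking that the corollary follows from Proposition~\ref{polar+maximal-p}, and your use of items~1 and~2 to collapse the sandwich $T^{\mu_p\mu_p}\subset S\subset T^{\mu_p}$ is the natural way to carry this out. Your observation that $p$-monotonicity of $T$ is an implicit standing hypothesis (needed to invoke Proposition~\ref{polar+maximal-p} and to make $M_p(T)\neq\emptyset$) is well taken, and your reading of ``pre-maximal monotone'' as ``pre-maximal $p$-monotone'' matches the context of the section.
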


\begin{proposition}\label{pre-maximal}
	Let $T,S:X\tos X^*$ be multivalued operators. If $T$ is pre-maximal $p$-monotone and $T\subset S\subset T^{\mu_p}$ then $S$ is pre-maximal $p$-monotone and $S^{\mu_p}=T^{\mu_p}$.
\end{proposition}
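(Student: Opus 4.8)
The plan is to exploit the inclusion-reversing behavior of the $p$-polar together with the observation that pre-maximality of $T$ forces $T^{\mu_p}$ to be a fixed point of the polar operation. First I would unpack the hypothesis: since $T$ is pre-maximal $p$-monotone, its polar $T^{\mu_p}$ is itself $p$-monotone and hence maximal $p$-monotone. Invoking the characterization recalled before Proposition~\ref{polar+maximal-p} (namely that an operator is maximal $p$-monotone if, and only if, it equals its own $p$-polar), applied to $T^{\mu_p}$ in place of $T$, this yields the key identity
\[
T^{\mu_p\mu_p}=T^{\mu_p}.
\]

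Next I would establish $S^{\mu_p}=T^{\mu_p}$, which is the crux of the argument and from which the rest follows. Applying the inclusion-reversing property (item~{\it 2} of Proposition~\ref{p-monotone-relation}) to the given chain $T\subset S\subset T^{\mu_p}$ produces
\[
T^{\mu_p\mu_p}\subset S^{\mu_p}\subset T^{\mu_p}.
\]
Substituting the identity $T^{\mu_p\mu_p}=T^{\mu_p}$ from the previous step squeezes $S^{\mu_p}$ between two copies of $T^{\mu_p}$, forcing $S^{\mu_p}=T^{\mu_p}$, which is exactly the second assertion of the proposition.

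Finally I would deduce pre-maximal $p$-monotonicity of $S$ from what has just been shown. On one hand, $S\subset T^{\mu_p}=S^{\mu_p}$ gives $S\subset S^{\mu_p}$, so $S$ is $p$-monotone. On the other hand, $S^{\mu_p}=T^{\mu_p}$ is $p$-monotone by the very definition of $T$ being pre-maximal $p$-monotone. Since $S$ is $p$-monotone and $S^{\mu_p}$ is $p$-monotone, $S$ meets the definition of pre-maximal $p$-monotone, completing the proof.

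The argument is short, and I expect no genuine technical obstacle once the first step is in place. The one subtlety worth flagging is precisely that first step: one must recognize that pre-maximality of $T$ is equivalent to $T^{\mu_p}$ being a fixed point of the polar, i.e. $T^{\mu_p}=T^{\mu_p\mu_p}$, rather than merely to $T^{\mu_p}$ being $p$-monotone. With that identity available, the inclusion-reversing property does all the remaining work through a routine squeeze.
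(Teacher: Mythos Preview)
Your proof is correct and follows essentially the same approach as the paper: both rely on the inclusion-reversing property of the $p$-polar and on the fact that $T^{\mu_p}$, being $p$-monotone by pre-maximality, is in fact maximal $p$-monotone. The only cosmetic difference is organizational: you make the fixed-point identity $T^{\mu_p\mu_p}=T^{\mu_p}$ explicit and run a two-sided squeeze on $S^{\mu_p}$, whereas the paper first observes $S^{\mu_p}\subset T^{\mu_p}$ (hence $S^{\mu_p}$ is $p$-monotone, hence maximal $p$-monotone) and then invokes maximality of $S^{\mu_p}$ inside the $p$-monotone $T^{\mu_p}$ to force equality.
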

\begin{proof}
Since $S\subset T^{\mu_p}$ and $T^{\mu_p}$ is $p$-monotone, we obtain that $S$ is $p$-monotone. By Proposition \ref{p-monotone-relation}, item {\it 3}, we have $T\subset S\subset S^{\mu_p}\subset T^{\mu_p}$. Thus, $S^{\mu_p}$ is $p$-monotone.
	Finally, maximal $p$-monotonicity of  $S^{\mu_p}$ implies $S^{\mu_p}=T^{\mu_p}$.
\end{proof}
The following proposition extends the Proposition 36 in \cite{BSML}.
\begin{proposition}
	Let $T:X\tos X^*$ be a $p$-monotone operator. Then the following conditions are equivalent:
	\begin{enumerate}
		\item$T$ is pre-maximal $p$-monotone.
		\item $T^{\mu_p}=T^{\mu_p\mu_p}$.
		\item $\F_{T^{\mu_p},p}=\F_{T^{\mu_p\mu_p},p}$.
	\end{enumerate}
\end{proposition}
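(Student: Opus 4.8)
The plan is to prove the three conditions equivalent by establishing the cycle $1 \Rightarrow 2 \Rightarrow 3 \Rightarrow 1$, leaning heavily on the already-developed machinery of $p$-polars and Fitzpatrick functions. The key algebraic identity underlying everything is that $T^{\mu_p} = \{\F_{T,p} \leq \pi\}$ by definition, so taking $p$-polars and applying Fitzpatrick functions are two sides of the same coin.

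For $1 \Rightarrow 2$: Suppose $T$ is pre-maximal $p$-monotone, meaning $T^{\mu_p}$ is itself $p$-monotone and hence maximal $p$-monotone. By the characterization recalled from~\cite{BBBRW07} (an operator is maximal $p$-monotone iff it equals its own $p$-polar), applying this to the operator $T^{\mu_p}$ gives precisely $T^{\mu_p} = (T^{\mu_p})^{\mu_p} = T^{\mu_p\mu_p}$, which is condition 2.

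For $2 \Rightarrow 3$: This is immediate and purely formal. Since the $p$-polar of any operator is determined by its Fitzpatrick function of order $p$ via $S^{\mu_p} = \{\F_{S,p} \leq \pi\}$, if the operators $T^{\mu_p}$ and $T^{\mu_p\mu_p}$ coincide then so must their Fitzpatrick functions $\F_{T^{\mu_p},p}$ and $\F_{T^{\mu_p\mu_p},p}$, since $\F_{S,p}$ depends only on the graph of $S$. I would simply observe that equal graphs produce equal Fitzpatrick functions by the very definition of $\F_{\cdot,p}$ as a supremum indexed by tuples drawn from the operator.

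For $3 \Rightarrow 1$: Here I start from $\F_{T^{\mu_p},p} = \F_{T^{\mu_p\mu_p},p}$. Passing to the sublevel sets $\{\cdot \leq \pi\}$ yields $T^{\mu_p\mu_p} = T^{\mu_p\mu_p\mu_p}$. The goal is to conclude $T^{\mu_p}$ is $p$-monotone, i.e.\ $T^{\mu_p} \subset T^{\mu_p\mu_p}$. Since $T$ is $p$-monotone we have $T \subset T^{\mu_p}$, and applying item~2 of Proposition~\ref{p-monotone-relation} twice gives $T^{\mu_p\mu_p} \subset T^{\mu_p}$ (taking polars reverses inclusions, and $T^{\mu_p\mu_p}\subset T^{\mu_p}$ follows from $T\subset T^{\mu_p}$). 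Combining this with the equality $T^{\mu_p\mu_p} = T^{\mu_p\mu_p\mu_p}$ and the general inclusion of $S$ into $S^{\mu_p}$ applied to $S = T^{\mu_p}$ should pin down $T^{\mu_p} = T^{\mu_p\mu_p}$, whence $T^{\mu_p}$ equals its own polar and is therefore maximal $p$-monotone, i.e.\ $T$ is pre-maximal $p$-monotone.

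The main obstacle I anticipate lies in the final implication $3 \Rightarrow 1$: the bookkeeping of nested polars must be done carefully, because unlike the classical monotone polar ($p=1$), the inclusion $T \subset T^{\mu_p\mu_p}$ can fail for $p > 1$ (as the authors note with the example $T=\{(0,0)\}$). Thus I cannot freely assume the bipolar contains $T$, and I must route the argument entirely through inclusions that are valid for general $p$—namely $S \subset S^{\mu_p}$ for $p$-monotone $S$ and the order-reversing property—rather than through any bipolar inflation. The safe strategy is to work with $S = T^{\mu_p}$ throughout, reducing condition 3 to the statement that $S$ is maximal $p$-monotone, which is exactly the definition of pre-maximality for $T$.
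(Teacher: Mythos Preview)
Your implications $1\Rightarrow 2$ and $2\Rightarrow 3$ match the paper's proof. The gap is in $3\Rightarrow 1$. You correctly derive $T^{\mu_p\mu_p}=T^{\mu_p\mu_p\mu_p}$ from condition~3, and you correctly note $T^{\mu_p\mu_p}\subset T^{\mu_p}$ from $T\subset T^{\mu_p}$ by order-reversal. But to close the argument you then invoke ``the general inclusion of $S$ into $S^{\mu_p}$ applied to $S=T^{\mu_p}$''. That inclusion $S\subset S^{\mu_p}$ is valid only when $S$ is already known to be $p$-monotone, and $p$-monotonicity of $T^{\mu_p}$ is precisely the conclusion you are after. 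So the step is circular; the two tools you list (order-reversal of the polar, plus $S\subset S^{\mu_p}$ for $p$-monotone $S$) do not by themselves force $T^{\mu_p}\subset T^{\mu_p\mu_p}$. Your closing paragraph flags the danger that $T\subset T^{\mu_p\mu_p}$ can fail, but then reintroduces the same problem one level up.

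The paper breaks the circularity by bringing in one more piece of structure, namely Proposition~\ref{polar+maximal-p} via Corollary~\ref{cor:premax}. From $T^{\mu_p\mu_p}=T^{\mu_p\mu_p\mu_p}$ one reads off that $T^{\mu_p\mu_p}$ is maximal $p$-monotone. Proposition~\ref{polar+maximal-p}, item~2, gives $T^{\mu_p\mu_p}\subset S$ for every $S\in M_p(T)$; maximality of $T^{\mu_p\mu_p}$ then forces $T^{\mu_p\mu_p}=S$ for each such $S$, so $M_p(T)$ is a singleton and, by item~1, $T^{\mu_p}=\bigcup_{S\in M_p(T)}S=T^{\mu_p\mu_p}$. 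This is exactly Corollary~\ref{cor:premax}, which the paper cites. Your outline is nearly complete; the missing ingredient is routing the last inclusion through the family $M_p(T)$ rather than through a bare appeal to $S\subset S^{\mu_p}$.
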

\begin{proof}
	If $T$ is pre-maximal $p$-monotone, then $T^{\mu_p}$ is maximal $p$-monotone and this implies $T^{\mu_p}=T^{\mu_p\mu_p}$ and, consequently,  $\F_{T^{\mu_p},p}=\F_{T^{\mu_p\mu_p},p}$.  This proves implications {\it 1} $\to$ {\it 2} and {\it 2} $\to$ {\it 3}.
	
	Now we show {\it 3} $\to$ {\it 1}. It follows from the definition that  $T^{\mu_p\mu_p}=T^{\mu_p\mu_p\mu_p}$ which in turn implies that 
	$T^{\mu_p\mu_p}$ is maximal $p$-monotone. By Corollary~\ref{cor:premax}, $T^{\mu_p}=T^{\mu_p\mu_p}$ and, therefore, $T$ is pre-maximal $p$-monotone.
\end{proof}

\subsection{On $p$-monotone operators with convex graph}
Throughout this section, we will drop the identification of a multivalued operator with its graph. 
Let $T:X\tos X^*$ be a multivalued operator. Denote by $T_{\cone}$ to the multivalued operator from $X$ to $X^*$ defined via its graph as
\[
\gra(T_{\cone})=\cone(\gra(T)).
\]
Note that $\gra(T)\subset\gra(T_{\cone})$ and, clearly, if either $T_{\cone}$ is $p$-monotone then $T$ is too. The converse is not true in general,
consider for instance $T=f:\R\to\R$, $f(x)=x^3$ which is monotone, but $T_{\cone}$ is not monotone.

We now give sufficient conditions in order to establish the $p$-monotonicity of $T_{\cone}$.
\begin{proposition}\label{cone-p-cyclic}
Let $T:X\tos X^*$ be a $p$-monotone operator with convex graph such that $(0,0)\in\gra(T)$. Then $T_{\cone}$ is $p$-monotone and $-\gra(T_{\cone})\subset \gra(T^{\mu_p})$.
\end{proposition}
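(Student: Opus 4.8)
The plan is to treat the two assertions separately, after recording two consequences of the hypotheses that I will use throughout: since $\gra(T)$ is convex and contains $(0,0)$ it is star-shaped about the origin, so $(\lambda x,\lambda x^*)\in\gra(T)$ whenever $(x,x^*)\in\gra(T)$ and $0\le\lambda\le 1$; consequently $\gra(T_{\cone})=\cone(\gra(T))=\{\lambda(x,x^*):\lambda\ge0,\ (x,x^*)\in\gra(T)\}$ is a convex cone containing the origin.

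First I would prove that $T_{\cone}$ is $p$-monotone by a homogeneity/normalisation argument. Given $(y_i,y_i^*)=\mu_i(x_i,x_i^*)\in\gra(T_{\cone})$, $i=0,\dots,p$, with $\mu_i\ge0$ and $(x_i,x_i^*)\in\gra(T)$, the cyclic sum $\sum_{i=0}^p\langle y_{i+1}-y_i,y_i^*\rangle$ is homogeneous of degree two in the $\mu_i$. Discarding the trivial case, I normalise by $M=\max_i\mu_i>0$: then $\nu_i:=\mu_i/M\in[0,1]$, so star-shapedness gives $(\nu_i x_i,\nu_i x_i^*)\in\gra(T)$, the $p$-monotonicity of $T$ bounds $\sum_{i=0}^p\langle\nu_{i+1}x_{i+1}-\nu_i x_i,\nu_i x_i^*\rangle\le0$, and multiplying by $M^2$ gives the required inequality.

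For $-\gra(T_{\cone})\subset\gra(T^{\mu_p})$ I would first cut down the scaling. Using the characterisation in Proposition~\ref{pro:caracpol} and the substitution $x_i=\lambda\xi_i$, the polar expression based at $(\lambda a,\lambda a^*)$ equals $\lambda^2$ times the one based at $(a,a^*)$ evaluated at the $\xi_i$; since $\tfrac1\lambda\gra(T)\subset\gra(T)$ for $\lambda\ge1$ by star-shapedness, this shows $\gra(T^{\mu_p})$ is stable under multiplication by scalars $\lambda\ge1$. As a typical point of $-\gra(T_{\cone})$ is $\mu\bigl(-(u,u^*)\bigr)$ with $(u,u^*)\in\gra(T)$ and $\mu\ge0$, and the range $\mu\in[0,1]$ already lies in $-\gra(T)$, this reduces the whole inclusion to showing $-\gra(T)\subset\gra(T^{\mu_p})$, i.e.\ $(-u,-u^*)\in T^{\mu_p}$ for every $(u,u^*)\in\gra(T)$.

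By Proposition~\ref{pro:caracpol} the latter is the inequality $\sum_{i,j}Q_{ij}P_{ij}\ge0$, where $P_{ij}=\langle x_i,x_j^*\rangle$, $x_0:=u$, the points $x_1,\dots,x_p$ range over $\gra(T)$, and $Q$ is the fixed coefficient pattern coming from the cyclic sum based at $(-u,-u^*)$. This is where I expect the main obstacle. The difficulty is structural: negating the base point makes the two cross terms adjacent to $u$ enter with the opposite sign to the interior chain $\langle x_{i+1},x_i^*\rangle$, so the bound cannot come from any single oriented cyclic inequality. My plan is to split $Q$ into its symmetric and antisymmetric parts. The monotonicity of $T_{\cone}$ (now available) together with $(0,0)\in\gra(T)$ forces the symmetric Gram matrix $\bigl[\tfrac12(P_{ij}+P_{ji})\bigr]$ of the family $\{u,x_1,\dots,x_p\}$ to be positive semidefinite; and a direct computation identifies the symmetric part of $Q$ with $I+\tfrac12A$, where $A$ is the signed adjacency matrix of a $(p+1)$-cycle, gauge-equivalent to an (anti)periodic cycle and hence with eigenvalues $2\cos\theta\ge-2$, so this part contributes a nonnegative amount. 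The remaining antisymmetric contribution must then be absorbed using the genuine $p$-monotone cyclic inequalities of $T_{\cone}$, taken over all cyclic orders and nonnegative rescalings of the points; producing this semidefinite-plus-cyclic certificate that exactly matches the offending cross terms is the technical heart of the argument.
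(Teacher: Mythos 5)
Your proof of the first assertion is correct and is essentially the paper's argument: you normalize by $\max_i\mu_i$ while the paper normalizes by $\sum_i\alpha_i$, but both devices place the rescaled points back in $\gra(T)$ via star-shapedness and then use degree-two homogeneity of the cyclic sum. Your reduction of the second assertion to $-\gra(T)\subset\gra(T^{\mu_p})$, via the stability of $\gra(T^{\mu_p})$ under dilations by $\lambda\geq 1$, is also sound. But the proof of $-\gra(T)\subset\gra(T^{\mu_p})$ itself is never given: what follows the reduction is a plan, and its first concrete claim is already unjustified. Monotonicity of $T_{\cone}$ together with $(0,0)\in\gra(T)$ yields $\inner{z}{z^*}\geq 0$ only for \emph{conic} combinations $(z,z^*)=\sum_i c_i(x_i,x_i^*)$ with $c_i\geq 0$; positive semidefiniteness of the symmetrized matrix $\bigl[\tfrac12(\inner{x_i}{x_j^*}+\inner{x_j}{x_i^*})\bigr]$ would require $\inner{\sum_i c_ix_i}{\sum_j c_jx_j^*}\geq 0$ for coefficients of \emph{arbitrary} sign, i.e.\ nonnegativity of $\pi$ on the whole linear span of the graph, which monotonicity of the cone does not provide (it would if $T$ were linear, but here the graph is only convex). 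The subsequent absorption of the antisymmetric part by ``cyclic inequalities over all cyclic orders and rescalings'' is not carried out at all; as you say yourself, that is the technical heart, so the argument has a genuine gap exactly at the step that matters.

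The missing idea is much simpler and makes your reduction unnecessary: translate the test cycle into the cone. Let $(x_0,x_0^*)$ be such that $(-x_0,-x_0^*)\in\gra(T_{\cone})$, and let $(x_i,x_i^*)\in\gra(T)$ for $i=1,\dots,p$, with $x_{p+1}=x_0$. Put $(y_i,y_i^*)=(x_i,x_i^*)+2(-x_0,-x_0^*)$ for $i=0,\dots,p$, so that $(y_0,y_0^*)=(-x_0,-x_0^*)$. Since $\gra(T_{\cone})$ is a convex cone containing both $\gra(T)$ and $(-x_0,-x_0^*)$, every $(y_i,y_i^*)$ lies in $\gra(T_{\cone})$. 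Moreover $y_{i+1}-y_i=x_{i+1}-x_i$ for all $i$, and $\sum_{i=0}^p\inner{x_{i+1}-x_i}{2x_0^*}=0$ by telescoping, so
\[
\sum_{i=0}^p\inner{x_{i+1}-x_i}{x_i^*}=\sum_{i=0}^p\inner{y_{i+1}-y_i}{y_i^*}\leq 0,
\]
the last inequality being the first part of the proposition applied to the cycle $\{(y_i,y_i^*)\}_{i=0}^p\subset\gra(T_{\cone})$. Hence $(x_0,x_0^*)\in\gra(T^{\mu_p})$, which is exactly $-\gra(T_{\cone})\subset\gra(T^{\mu_p})$; this two-line translation trick is the paper's proof of the second assertion.
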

\begin{proof}
Let $\{(x_i,x_i^*)\}_{i=0}^p\subset \gra(T_{\cone})$, then there exist $\{(y_i,y_i^*)\}_{i=0}^p\subset \gra(T)$ and $\{\alpha_i\}_{i=0}^p\subset\R_+$ such that $(x_i,x_i^*)=\alpha_i(y_i,y_i^*)$, for all $i\in\{0,\ldots,p\}$.
Consider $m=\displaystyle\sum_{i=0}^p\alpha_i$, since $\gra(T)$ is convex and $(0,0)\in \gra(T)$,
\[
(x_i/m,x_i^*/m)=(\alpha_i/m)(y_i,y_i^*)+(1-\alpha_i/m)(0,0)\in \gra(T).
\]
Hence,
\[
\sum_{i=0}^p\langle x_{i+1}-x_i,x_i^*\rangle=
m^2  \sum_{i=0}^p\left\langle \frac{x_{i+1}}{m}-\frac{x_i}{m},\frac{x_i^*}{m}\right\rangle\leq0.
\] 
and $T_{\cone}$ is $p$-monotone.

We now show that $-\gra(T_{\cone})\subset \gra(T^{\mu_p})$. Let $(-x_0,-x^*_0)\in \gra(T_{\cone})$ and $\{(x_i,x_i^*)\}_{i=0}^p\subset \gra(T)$.
As the graph of $T_{\cone}$ is a convex cone, define
$(y_i,y_i^*)=2(-x_0,-x_0^*)+(x_i,x_i^*)\in \gra(T_{\cone})$, for all $i\in\{1,2,\dots,p\}$. Thus,
\[
\sum_{i=0}^p\inner{x_{i+1}-x_i}{x_i^*}=\sum_{i=0}^p\inner{y_{i+1}-y_i}{y_i^*}\leq 0
\]
where $x_{p+1}=x_0$ and $y_{p+1}=y_0$. Therefore, $(x_0,x_0^*)\in \gra(T^{\mu_p})$ and the proposition follows.
\end{proof}
\begin{corollary}
Let $T:X\tos X^*$ be a $p$-monotone operator with convex graph such that $(0,0)\in \gra(T)$. Then $\aff(\gra(T))\subset \gra(T^{\mu_p})$.
\end{corollary}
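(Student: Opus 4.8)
The plan is to reduce the statement to the preceding proposition by means of the identity recalled in Section~\ref{intro}, and then to exploit the translation invariance of the cyclic sum appearing in Proposition~\ref{pro:caracpol}. Since $\gra(T)$ is convex and $(0,0)\in\gra(T)$, that identity gives
\[
\aff(\gra(T))=\cone(\gra(T))-\cone(\gra(T))=\gra(T_{\cone})-\gra(T_{\cone}),
\]
so it suffices to prove that every difference $w_1-w_2$ with $w_1,w_2\in\gra(T_{\cone})$ belongs to $\gra(T^{\mu_p})$. By Proposition~\ref{cone-p-cyclic} the operator $T_{\cone}$ is $p$-monotone, and $\gra(T_{\cone})=\cone(\gra(T))$ is a convex cone, hence closed under addition.

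Writing $w_1=(a,a^*)$, $w_2=(b,b^*)$ and $(x_0,x_0^*)=w_1-w_2$, I would verify the membership $w_1-w_2\in\gra(T^{\mu_p})$ through Proposition~\ref{pro:caracpol}: one has to check that $\sum_{i=0}^p\inner{x_{i+1}-x_i}{x_i^*}\le 0$ for every $\{(x_i,x_i^*)\}_{i=1}^p\subset\gra(T)$ with $x_{p+1}=x_0$. The crucial observation is that this cyclic sum is invariant under a simultaneous translation $(x_i,x_i^*)\mapsto(x_i+b,x_i^*+b^*)$ of all of its nodes. Indeed $\sum_{i=0}^p(x_{i+1}-x_i)=x_{p+1}-x_0=0$, so shifting the first coordinates leaves every difference $x_{i+1}-x_i$ unchanged, while shifting the second coordinates adds $\inner{\sum_{i=0}^p(x_{i+1}-x_i)}{b^*}=0$. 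This is precisely the substitution $(y_i,y_i^*)$ already used in the proof of Proposition~\ref{cone-p-cyclic}.

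After this translation the node $(x_0,x_0^*)=w_1-w_2$ becomes $w_1=(a,a^*)\in\gra(T_{\cone})$, while each node $(x_i,x_i^*)$ with $i\ge 1$ becomes $(x_i,x_i^*)+w_2$, which again lies in $\gra(T_{\cone})$ since $(x_i,x_i^*)\in\gra(T)\subset\gra(T_{\cone})$, $w_2\in\gra(T_{\cone})$, and $\gra(T_{\cone})$ is closed under addition. Thus all $p+1$ translated nodes lie in $\gra(T_{\cone})$, and the $p$-monotonicity of $T_{\cone}$ forces the (unchanged) cyclic sum to be nonpositive; hence $w_1-w_2\in\gra(T^{\mu_p})$ and the corollary follows. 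The only genuinely substantive point is this translation-invariance identity together with the bookkeeping that the shifted cycle remains inside the convex cone $\gra(T_{\cone})$; everything else is a direct appeal to Propositions~\ref{cone-p-cyclic} and~\ref{pro:caracpol}. It is worth noting that the special case $a=a^*=0$ recovers exactly the inclusion $-\gra(T_{\cone})\subset\gra(T^{\mu_p})$ of Proposition~\ref{cone-p-cyclic}, so the corollary is the bilateral strengthening of that inclusion obtained by the very same device.
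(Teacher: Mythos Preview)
Your argument is correct and essentially identical to the paper's own proof: both use $\aff(\gra(T))=\gra(T_{\cone})-\gra(T_{\cone})$, translate the test cycle by the subtracted cone element so that all $p+1$ nodes land back in $\gra(T_{\cone})$, and invoke the $p$-monotonicity of $T_{\cone}$ from Proposition~\ref{cone-p-cyclic} together with the telescoping identity $\sum_{i=0}^p(x_{i+1}-x_i)=0$ to conclude. The only difference is notational.
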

\begin{proof}
Recall that, since $(0,0)\in \gra(T)$, $\aff(\gra(T))=\gra(T_{\cone})-\gra(T_{\cone})$. So take $(x,x^*),(y,y^*)\in \gra(T_{\cone})$, we aim to prove that $(x,x^*)-(y,y^*)\in \gra(T^{\mu_p})$. Let $\{(x_i,x_i^*)\}_{i=0}^p\subset\gra(T)$ and define $(x_0,x_0^*)=(x,x^*)-(y,y^*)$,
\[
(y_i,y_i^*)=(x_i,x_i^*)+(y,y^*),\quad i\in\{1,\ldots,p\},
\]
and $(y_0,y_0^*)=(x,x^*)$. Since $\gra(T_{\cone})$ is a convex cone, $(y_i,y_i^*)\in \gra(T_{\cone})$, for all $i$. Therefore, as $T_{\cone}$ is $p$-monotone, by Proposition~\ref{cone-p-cyclic},
\[
\sum_{i=0}^p\inner{y_{i+1}-y_i}{y_i^*}\leq 0.
\]
The corollary now follows by noting that
\[
\sum_{i=0}^p\inner{x_{i+1}-y_i}{x_i^*}=\sum_{i=0}^p\inner{y_{i+1}-y_i}{y_i^*}\leq 0,
\]
that is $(x,x^*)-(y,y^*)\in \gra(T^{\mu_p})$.
\end{proof}
Observe that both the $p$-monotone polar and the affine hull of the graph of an operator are preserved by translations. Therefore, we readily obtain the following corollary.
\begin{corollary}\label{cor:convexaffine}
Let $T:X\tos X^*$ be a $p$-monotone operator with convex graph. Then $\aff(\gra(T))\subset \gra(T^{\mu_p})$.
\end{corollary}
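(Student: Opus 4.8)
The plan is to reduce Corollary~\ref{cor:convexaffine} to the preceding corollary by exploiting the translation-invariance already observed in the remark immediately above it. The preceding corollary handles exactly the case $(0,0)\in\gra(T)$, so the only obstacle is removing that normalization hypothesis when $\gra(T)$ is a general convex graph that need not contain the origin.

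First I would fix any basepoint $(u,u^*)\in\gra(T)$, which exists whenever $T$ is nontrivial (the empty case being vacuous), and form the translated operator $S$ defined by $\gra(S)=\gra(T)-(u,u^*)$. Then $\gra(S)$ is still convex, and now $(0,0)\in\gra(S)$. Moreover, a translation does not affect $p$-monotonicity: this follows from item~{\it 6} of Proposition~\ref{p-monotone-relation} together with the characterization $T$ is $p$-monotone iff $T\subset T^{\mu_p}$, or directly from the defining telescoping sum, since replacing each $(x_i,x_i^*)$ by $(x_i,x_i^*)-(u,u^*)$ leaves $\sum_{i=0}^p\inner{x_{i+1}-x_i}{x_i^*}$ unchanged (the added terms telescope and cancel because $x_{p+1}=x_0$). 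Hence $S$ is a $p$-monotone operator with convex graph satisfying $(0,0)\in\gra(S)$.

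Next I would apply the preceding corollary to $S$ to conclude $\aff(\gra(S))\subset\gra(S^{\mu_p})$. The final step is to translate this inclusion back to $T$. By item~{\it 6} of Proposition~\ref{p-monotone-relation} we have $S^{\mu_p}=(T-(u,u^*))^{\mu_p}=T^{\mu_p}-(u,u^*)$, so $\gra(S^{\mu_p})=\gra(T^{\mu_p})-(u,u^*)$. On the other side, the affine hull commutes with translation, so $\aff(\gra(S))=\aff(\gra(T))-(u,u^*)$. Adding $(u,u^*)$ to both sides of the inclusion for $S$ therefore yields $\aff(\gra(T))\subset\gra(T^{\mu_p})$, as desired.

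The step I expect to require the most care is verifying that the two geometric operations genuinely commute with the translation in the precise form needed, namely that both $\aff(\cdot)$ and the $\mu_p$-polar shift by the same vector $(u,u^*)$; invariance of the polar is exactly item~{\it 6} of Proposition~\ref{p-monotone-relation}, and invariance of the affine hull is elementary, so once these are lined up the result is immediate. Everything else is bookkeeping, and the argument is precisely the translation-reduction that the remark before the statement already signals.
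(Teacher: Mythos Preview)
Your proposal is correct and follows exactly the approach the paper uses: the paper's proof is literally the one-line observation that both the $p$-monotone polar (Proposition~\ref{p-monotone-relation}, item~{\it 6}) and the affine hull are preserved by translations, so the result reduces to the preceding corollary after translating any point of $\gra(T)$ to the origin. You have simply written out the details of that reduction more carefully than the paper does.
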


\section{On linear $p$-monotone operators}
In this section we deal with linear operators.  We begin with a couple of lemmas.
\begin{lemma}\label{span-inclusion-polar}
	Let $T:X\tos X^*$ be a linear operator and let $(x,x^*)\in T^{\mu_p}$. Then, for all $t\in\R$, $t(x,x^*)\in T^{\mu_p}$.
\end{lemma}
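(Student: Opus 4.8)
The plan is to work directly from the characterization of the $p$-polar in Proposition~\ref{pro:caracpol} and exploit that $\gra(T)$, being a linear subspace, is invariant under arbitrary real rescaling. The scalar $t$ splits naturally into two regimes: $t\neq 0$, handled by an algebraic rescaling of the test family, and $t=0$, which the rescaling cannot reach and which I would instead obtain topologically.

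For $t\neq 0$, to prove $t(x,x^*)\in T^{\mu_p}$ I take an arbitrary test family $\{(y_i,y_i^*)\}_{i=1}^p\subset T$ and put $y_0=tx$, $y_0^*=tx^*$, $y_{p+1}=y_0$; by Proposition~\ref{pro:caracpol} the goal is $\sum_{i=0}^p\inner{y_{i+1}-y_i}{y_i^*}\le 0$. The key idea is to rescale by $1/t$: since $\gra(T)$ is linear, each $(x_i,x_i^*):=\tfrac1t(y_i,y_i^*)$ still lies in $T$ for $i=1,\dots,p$, while $x_0:=\tfrac1t y_0=x$ and $x_0^*:=\tfrac1t y_0^*=x^*$ recover the given point and $x_{p+1}:=\tfrac1t y_{p+1}=x_0$. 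Feeding this rescaled family into the hypothesis $(x,x^*)\in T^{\mu_p}$ yields $\sum_{i=0}^p\inner{x_{i+1}-x_i}{x_i^*}\le 0$; because each summand is bilinear, this sum equals $t^{-2}\sum_{i=0}^p\inner{y_{i+1}-y_i}{y_i^*}$, and since $t^{-2}>0$ the desired inequality follows. Hence $t(x,x^*)\in T^{\mu_p}$ for every $t\neq 0$, for both signs of $t$ at once, the sign being irrelevant after dividing by the positive quantity $t^2$.

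It remains to treat $t=0$, and this is where the argument genuinely changes character, since the rescaling above divides by $t$. Here I would invoke item~4 of Proposition~\ref{p-monotone-relation}, namely that $\gra(T^{\mu_p})$ is strongly closed: as $t(x,x^*)\in T^{\mu_p}$ for all $t\neq 0$ and $t(x,x^*)\to(0,0)$ strongly as $t\to 0$, closedness gives $(0,0)\in T^{\mu_p}$, finishing all cases. The only delicate bookkeeping is the cyclic wrap-around $x_{p+1}=x_0$: one must confirm that rescaling the \emph{entire} family by $1/t$, including the endpoints inherited from the fixed point $t(x,x^*)$, preserves admissibility, which it does precisely because both endpoints scale by the same factor. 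The main obstacle is thus not the computation but recognizing that $t=0$ is unreachable algebraically and must be supplied by the closedness of the polar.
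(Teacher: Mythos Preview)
Your proof is correct and follows essentially the same approach as the paper's: both handle $t\neq 0$ by rescaling the test family by $1/t$ (using linearity of $\gra(T)$) to pull out a factor of $t^{2}$ from the cyclic sum, and both obtain the case $t=0$ from the strong closedness of $T^{\mu_p}$. The only difference is notational (which family is called $(x_i,x_i^*)$ versus $(y_i,y_i^*)$).
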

\begin{proof}
	Let $\{(x_i,x_i^*)\}_{i=1}^p\subset T$ and $t\in\R$. Assume $t\neq 0$, since $T$ is linear then $(y_i,y_i^*)=(x_i/t,x_i^*/t)\in T$. Thus,
	\begin{multline*}
	\inner{x_1-tx}{tx^*}+\sum_{i=1}^p\inner{x_{i+1}-x_i}{x_i^*}+\inner{tx-x_p}{x_p^*} 
	=\\
	t^2\left(\inner{y_1-x}{x^*}+\sum_{i=1}^p\inner{y_{i+1}-y_i}{y_i^*}+\inner{x-y_p}{y_p^*}\right)\leq 0.
	\end{multline*}
	This implies $(tx,tx^*)\in T^{\mu_p}$, when $t\neq 0$. Using the fact that $T^{\mu_p}$ is closed, we conclude that $(0,0)\in T^{\mu_p}$. The lemma follows.
\end{proof}
\begin{lemma}\label{lem:domt}
	Let $T:X\tos X^*$ be a linear monotone operator. Then $T^{\mu}(0)\subset\dom(T)^{\bot}$.
\end{lemma}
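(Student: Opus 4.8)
The plan is to unwind the definition of the monotone polar at the origin and then exploit the linearity of $T$ through a scaling argument. By Proposition~\ref{pro:caracpol} with $p=1$, a pair $(0,x_0^*)$ lies in $T^{\mu}$ precisely when
\[
\langle x_1-0,x_0^*\rangle+\langle 0-x_1,x_1^*\rangle\leq 0,\qquad\forall\,(x_1,x_1^*)\in T,
\]
which simplifies to $\langle x_1,x_0^*\rangle\leq\langle x_1,x_1^*\rangle$ for every $(x_1,x_1^*)\in T$. So, fixing $x_0^*\in T^{\mu}(0)$, the goal reduces to showing $\langle x_1,x_0^*\rangle=0$ for every $x_1\in\dom(T)$, since $\dom(T)^{\bot}$ is exactly the set of functionals annihilating $\dom(T)$.

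First I would fix an arbitrary $(x_1,x_1^*)\in T$. Because $T$ is linear, $(tx_1,tx_1^*)\in T$ for every $t\in\R$, and substituting this pair into the inequality above yields
\[
t\langle x_1,x_0^*\rangle\leq t^2\langle x_1,x_1^*\rangle,\qquad\forall\,t\in\R.
\]
Next I would read this as the statement that the quadratic $g(t)=t^2\langle x_1,x_1^*\rangle-t\langle x_1,x_0^*\rangle$ is nonnegative on all of $\R$ and vanishes at $t=0$. Since $T$ is linear and monotone, $\langle x_1,x_1^*\rangle\geq 0$ (by the characterization recalled in the introduction), so $g$ is a nondegenerate or degenerate upward quadratic having a root at the origin; for it to remain nonnegative everywhere, its linear coefficient must vanish, forcing $\langle x_1,x_0^*\rangle=0$. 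Equivalently, one may divide by $t>0$ and let $t\to 0^+$ to get $\langle x_1,x_0^*\rangle\leq 0$, then divide by $t<0$ and let $t\to 0^-$ to get $\langle x_1,x_0^*\rangle\geq 0$.

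Finally, since every $x_1\in\dom(T)$ is the first coordinate of some $(x_1,x_1^*)\in T$, the conclusion $\langle x_1,x_0^*\rangle=0$ holds for all $x_1\in\dom(T)$, that is $x_0^*\in\dom(T)^{\bot}$, whence $T^{\mu}(0)\subset\dom(T)^{\bot}$. I do not anticipate a genuine obstacle: the argument is elementary once the polar condition is linearized, and the only point requiring a little care is the degenerate case $\langle x_1,x_1^*\rangle=0$, where $g(t)=-t\langle x_1,x_0^*\rangle\geq 0$ for all $t$ immediately forces $\langle x_1,x_0^*\rangle=0$ as well.
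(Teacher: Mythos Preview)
Your proof is correct and follows essentially the same route as the paper: both arguments take $x^*\in T^{\mu}(0)$, exploit linearity to substitute the scaled pair $(ty,ty^*)\in T$, arrive at the inequality $t^2\inner{y}{y^*}-t\inner{y}{x^*}\geq 0$ for all $t\in\R$, and conclude that the linear coefficient $\inner{y}{x^*}$ must vanish. The only cosmetic difference is that you start from the cyclic-sum form of the polar (Proposition~\ref{pro:caracpol}) and spell out the degenerate case, whereas the paper writes the monotone polar condition directly as $\inner{0-ty}{x^*-ty^*}\geq 0$.
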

\begin{proof}
	Take $x^*\in T^{\mu}(0)$ and $(y,y^*)\in T$. Since $T$ is linear, for every $t\in\R$, 
	\[
	\inner{0-ty}{x^*-ty^*}\geq 0,
	\]
	that is, $t^2\inner{y}{y^*}-t\inner{y}{x^*}\geq 0$, for all $t\in\R$. This implies $\inner{y}{x^*}=0$ and, as $y$ was taken arbitrarily, $x^*\in\dom(T)^\bot$.  
\end{proof} 

\begin{theorem}\label{charac-linear}
Let $T:X\tos X^*$ be a linear operator. The following are equivalent:
\begin{enumerate}
\item $T$ is $p$-monotone,
\item $T^{\mu_p}(0)=\dom(T)^{\bot}$,
\item  $T^{\mu_p}\neq\emptyset$,
\item $(0,0)\in T^{\mu_p}$.
\end{enumerate}
\end{theorem}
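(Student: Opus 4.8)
The plan is to prove the theorem as the cyclic chain of implications $1\Rightarrow 2\Rightarrow 3\Rightarrow 4\Rightarrow 1$, exploiting two structural features of a linear operator: its graph contains $(0,0)$, and it is stable under the scaling and translation operations supplied by Lemmas~\ref{span-inclusion-polar} and~\ref{lem:domt}. Three of the four arrows are short. For $2\Rightarrow 3$ one only notes that $\dom(T)^{\bot}$ always contains $0$, so the equality $T^{\mu_p}(0)=\dom(T)^{\bot}$ forces $(0,0)\in T^{\mu_p}$, whence $T^{\mu_p}\neq\emptyset$. For $3\Rightarrow 4$ one takes any $(x,x^*)\in T^{\mu_p}$ and applies Lemma~\ref{span-inclusion-polar} with $t=0$ to obtain $(0,0)\in T^{\mu_p}$. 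The substance therefore lies in $4\Rightarrow 1$ and in the two inclusions that make up $1\Rightarrow 2$.

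For $4\Rightarrow 1$, I would first unfold $(0,0)\in T^{\mu_p}$ through Proposition~\ref{pro:caracpol}: since the $i=0$ summand $\inner{x_1-0}{0}$ vanishes, the membership reduces to the inequality $\sum_{i=1}^p\inner{x_{i+1}-x_i}{x_i^*}\le 0$ for every $\{(x_i,x_i^*)\}_{i=1}^p\subset T$, with $x_{p+1}=0$. To show $T$ is $p$-monotone it is enough (the paper records that $T$ is $p$-monotone precisely when $T\subset T^{\mu_p}$) to verify $(x_0,x_0^*)\in T^{\mu_p}$ for an arbitrary $(x_0,x_0^*)\in T$. The device is to translate the entire cyclic configuration by $-(x_0,x_0^*)$: setting $(z_i,z_i^*)=(x_i,x_i^*)-(x_0,x_0^*)$, linearity of $T$ keeps every $(z_i,z_i^*)$ in $T$, so the reduced inequality applies to the $z_i$. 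A telescoping computation then shows that the two sums differ by $\inner{(x_1-x_0)+\sum_{i=1}^{p-1}(x_{i+1}-x_i)+(x_0-x_p)}{x_0^*}$, whose first argument telescopes to $0$; hence the translated sum equals the genuine cyclic sum $\sum_{i=0}^p\inner{x_{i+1}-x_i}{x_i^*}$, giving $(x_0,x_0^*)\in T^{\mu_p}$.

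For $1\Rightarrow 2$ I would argue the two inclusions separately. Because $p$-monotone implies monotone, $T$ is linear and monotone, so Lemma~\ref{lem:domt} gives $T^{\mu}(0)\subset\dom(T)^{\bot}$; combined with the nesting $T^{\mu_p}\subset T^{\mu_{p-1}}\subset\cdots\subset T^{\mu}$ from Proposition~\ref{p-monotone-relation}(1), this yields $T^{\mu_p}(0)\subset\dom(T)^{\bot}$. For the reverse inclusion, note that hypothesis $1$ already forces $(0,0)\in T\subset T^{\mu_p}$, so the reduced inequality of the previous paragraph is available. Then for any $x^*\in\dom(T)^{\bot}$ the extra $i=0$ term $\inner{x_1}{x^*}$ vanishes, since $x_1\in\dom(T)$, and the cyclic sum defining $(0,x^*)\in T^{\mu_p}$ collapses exactly to that reduced inequality; thus $\dom(T)^{\bot}\subset T^{\mu_p}(0)$, completing the equality $2$.

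I expect the main obstacle to be the translation argument in $4\Rightarrow 1$, specifically the index bookkeeping that makes the translated sum telescope back to the true cyclic $p$-monotonicity sum; the crucial point is that all the discrepancy is collected into a single pairing against $x_0^*$ whose other factor vanishes. Once that identity is in hand, the remaining steps are routine assembly of the closure, nesting, and scaling properties of the $p$-polar already established in the excerpt.
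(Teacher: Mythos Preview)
Your proposal is correct and follows essentially the same approach as the paper's proof: the same cyclic scheme of implications, the same use of Lemma~\ref{span-inclusion-polar} for $3\Rightarrow 4$, the same nesting $T^{\mu_p}\subset T^{\mu}$ combined with Lemma~\ref{lem:domt} for one inclusion in $1\Rightarrow 2$, and the same translation $(x_i,x_i^*)\mapsto (x_i-x_0,x_i^*-x_0^*)$ together with the telescoping identity $\sum_{i=0}^p\inner{x_{i+1}-x_i}{x_0^*}=0$ for $4\Rightarrow 1$. The only cosmetic difference is that the paper phrases $4\Rightarrow 1$ as a direct verification of the $p$-monotonicity inequality rather than as $T\subset T^{\mu_p}$, and routes $2\to 4$ directly instead of through $3$.
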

\begin{proof}
We first show {\it 1} $\to$ {\it 2}. By Proposition~\ref{p-monotone-relation}, item {\it 2}, and Lemma~\ref{lem:domt}, $T^{\mu_p}(0)\subset T^{\mu_1}(0)\subset\dom(T)^{\bot}$.  
Now let $u_0^*\in \dom(T)^{\bot}$ and $u_0=u_{p+1}=0$.  Taking $(u_1,u_1^*),\ldots,(u_p,u_p^*)\in T$, 
\begin{align*}
\sum_{i=0}^p\inner{u_{i+1}-u_i}{u_i^*}&=\inner{u_{1}-u_0}{u_0^*}+\sum_{i=1}^p\inner{u_{i+1}-u_i}{u_i^*}\\
&=\inner{u_{1}-0}{0}+\sum_{i=1}^p\inner{u_{i+1}-u_i}{u_i^*}\leq 0,
\end{align*}
where the second equality holds since $u_1\in\dom(T)$ and the previous inequality comes from the fact that 
$\{(0,0)\}\cup\{(u_i,u_i^*)\}_{i=1}^p\subset T$ and $T$ is $p$-monotone.
	
The implications {\it 2} $\to$ {\it 4} and {\it 4} $\to$ {\it 3} are trivial. 
The implication {\it 3} $\to$ {\it 4} is a direct consequence of Lemma \ref{span-inclusion-polar}. We conclude by showing {\it 4} $\to$ {\it 1}. Let $\{(x_i,x_i^*)\}_{i=0}^p\subset T$ with $x_{p+1}=x_0$ and define $(y_i,y_i^*)=(x_i-x_0,x_i^*-x_0^*)$, for $i=0,\ldots,p$. Clearly $\{(y_i,y_i^*\}_{i=0}^p\subset T$, since $T$ is linear. Now, as $(y_0,y_0^*)=(0,0)\in T^{\mu_p}$, using Proposition~\ref{pro:caracpol},
\[
0\geq \sum_{i=0}^p\inner{y_{i+1}-y_i}{y_i^*}=\sum_{i=0}^p\inner{x_{i+1}-x_i}{x_i^*-x_0^*}=\sum_{i=0}^p\inner{x_{i+1}-x_i}{x_i^*},
\]
where the equality follows from $\displaystyle\sum_{i=0}^p\langle x_{i+1}-x_i,x_0^*\rangle=0$. This implies that $T$ is $p$-monotone.
\end{proof}


As a consequence of Theorem~\ref{charac-linear}, we obtain the following theorem.
\begin{theorem}\label{max-carac-cero}
Let $T:X\tos X^*$ be a linear operator. The following hold.
\begin{enumerate}
\item If $T$ is maximal $p$-monotone then $T(0)=\dom(T)^{\bot}$.
\item If $T$ is $p$-monotone with closed domain and $T(0)=\dom(T)^{\bot}$ then $T$ is maximal monotone and, in particular, maximal $p$-monotone.
\end{enumerate}
\end{theorem}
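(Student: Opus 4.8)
The plan is to handle the two items separately: the first is an immediate consequence of Theorem~\ref{charac-linear}, while the second carries the real content. For item \emph{1}, suppose $T$ is maximal $p$-monotone. Then $T$ is $p$-monotone and, by the characterization that maximal $p$-monotonicity is equivalent to $T=T^{\mu_p}$, we have $T=T^{\mu_p}$. Since $T$ is linear and $p$-monotone, the implication \emph{1}$\to$\emph{2} of Theorem~\ref{charac-linear} gives $T^{\mu_p}(0)=\dom(T)^{\bot}$, and substituting $T=T^{\mu_p}$ this reads $T(0)=\dom(T)^{\bot}$.

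For item \emph{2}, assume $T$ is $p$-monotone (hence monotone, so $T\subset T^{\mu_1}$), that $D:=\dom(T)$ is closed, and that $T(0)=D^{\bot}$. I would prove $T$ is maximal monotone by establishing the reverse inclusion $T^{\mu_1}\subset T$; together with $T\subset T^{\mu_1}$ this gives $T=T^{\mu_1}$, which is exactly maximal monotonicity. So fix $(x,x^*)\in T^{\mu_1}$. The first step is to show that $x\in D$. Since $T(0)=D^{\bot}$, every pair $(0,y^*)$ with $y^*\in D^{\bot}$ belongs to $T$; testing the polar inequality $\inner{x-y}{x^*-y^*}\geq 0$ against these pairs yields $\inner{x}{x^*}-\inner{x}{y^*}\geq 0$ for all $y^*\in D^{\bot}$. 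As $D^{\bot}$ is a linear subspace, $\inner{x}{\cdot}$ cannot be bounded below on it unless it vanishes identically, so $\inner{x}{y^*}=0$ for every $y^*\in D^{\bot}$. By Hahn--Banach this forces $x\in\overline{D}$, and closedness of the domain gives $x\in D$. This is the \emph{only} place where closedness is used, and I expect it to be the main obstacle: without it one would obtain merely $x\in\overline{D}$, and the next step—which needs a genuine element of $T(x)$—could break down.

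The second step pins down $x^*$ modulo $T(0)$. Since $x\in D$, choose $\bar x^*\in T(x)$. Because $\gra(T)$ is a subspace and $(x,\bar x^*)\in T$, translating the graph by $(x,\bar x^*)$ leaves it unchanged, so $T+(x,\bar x^*)=T$; applying item \emph{6} of Proposition~\ref{p-monotone-relation} gives $T^{\mu_1}=T^{\mu_1}+(x,\bar x^*)$, whence $(x,x^*)-(x,\bar x^*)=(0,x^*-\bar x^*)\in T^{\mu_1}$, i.e. $x^*-\bar x^*\in T^{\mu_1}(0)$. By Lemma~\ref{lem:domt}, $T^{\mu_1}(0)\subset D^{\bot}=T(0)$, so $x^*-\bar x^*\in T(0)$ and therefore $x^*\in\bar x^*+T(0)=T(x)$, that is $(x,x^*)\in T$. (Alternatively, one avoids the translation identity by testing the polar inequality against the line $(x+tz,\bar x^*+tz^*)\in T$, $t\in\R$, for arbitrary $(z,z^*)\in T$; nonnegativity of the quadratic $t^2\inner{z}{z^*}-t\inner{z}{x^*-\bar x^*}$ for all $t$ forces $\inner{z}{x^*-\bar x^*}=0$, hence $x^*-\bar x^*\in D^{\bot}=T(0)$, exactly as in the proof of Lemma~\ref{lem:domt}.)

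This establishes $T^{\mu_1}=T$, so $T$ is maximal monotone. Finally, for the ``in particular'' clause: since $T$ is $p$-monotone, any $p$-monotone operator $S\supset T$ is in particular monotone, and maximality of $T$ as a monotone operator forces $S=T$; thus $T$ is also maximal $p$-monotone.
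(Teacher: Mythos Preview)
Your proof is correct. Item~\emph{1} is handled exactly as in the paper (via Theorem~\ref{charac-linear} and the identity $T=T^{\mu_p}$), only spelled out in more detail.

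For item~\emph{2} the paper takes a different, much shorter route: it simply invokes \cite[Proposition~5.2]{BorBau2011}, which already states that a linear monotone relation with closed domain satisfying $T(0)=\dom(T)^{\bot}$ is maximal monotone, and then appeals to Proposition~\ref{pro:pmonqmon}, item~\emph{2}, for the ``in particular'' clause. What you have done instead is essentially reprove that external proposition from scratch using only tools internal to the paper: the Hahn--Banach step to force $x\in\overline{D}=D$, and then Lemma~\ref{lem:domt} (or the direct quadratic argument you sketch) together with the translation invariance of the polar to pin down $x^*$ modulo $T(0)$. Your argument is self-contained and makes transparent exactly where the hypotheses (closedness of the domain, the equality $T(0)=D^{\bot}$) enter, at the cost of being longer; the paper's version is terse but depends on an outside reference. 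Your final deduction of maximal $p$-monotonicity from maximal monotonicity plus $p$-monotonicity is the same as the proof of Proposition~\ref{pro:pmonqmon}, item~\emph{2}, just inlined.
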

\begin{proof}
\begin{enumerate}
\item By Theorem~\ref{charac-linear}, $T(0)=T^{\mu_p}(0)=\dom(T)^{\bot}$.
\item This is an immediate consequence of~\cite[Proposition 5.2]{BorBau2011} and Proposition~\ref{pro:pmonqmon}.
\end{enumerate}
\end{proof}

\begin{example}\label{ex:rlinmax}
Let $T$ be a linear operator with closed domain, and let $N_D=N_{\dom(T)}$ be the normal cone operator associated to the domain of $T$. Consider $R=T+N_D$, then $R$ is also linear and $p$-monotone. It is easy to verify that $\dom(R)=\dom(T)$, therefore  $R(0)=T(0)+N_D(0)=\dom(T)^{\bot}=\dom(R)^{\bot}$. By Theorem~\ref{max-carac-cero}, $R$ is maximal monotone and maximal $p$-monotone. Note that the graph of $R$ can be characterized as a sum of linear subspaces of $X\times X^*$, namely,
\[
\gra(R)=\gra(T)+(\{0\}\times\dom(T)^{\bot}).
\]
\end{example}

We now aim to characterize the pre-maximal $p$-monotonicity of linear operators. First, one lemma.

\begin{lemma}\label{mono-linear-included}
Let $T:X\tos X^*$ be a linear monotone operator. If $S:X\tos X^*$ is linear and
\[
 T\subset S\subset T^\mu,
\]
then $S$ is also monotone.
\end{lemma}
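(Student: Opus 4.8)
The plan is to exploit the linearity of all three operators together with the quadratic trick already used in Lemma~\ref{lem:domt}, reducing the claim to a statement about a single pair $(y,y^*)\in S$ against the whole of $T$. Since $S$ is assumed linear, to show $S$ is monotone it suffices (by the criterion recalled in the introduction for linear operators) to prove $\inner{y}{y^*}\geq 0$ for every $(y,y^*)\in S$. So I would fix an arbitrary $(y,y^*)\in S$ and aim to establish this single inequality.

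The key observation is that $S\subset T^\mu$ means $(y,y^*)\in T^\mu$, so by Proposition~\ref{pro:caracpol} (with $p=1$) we have $\inner{x-y}{x^*-y^*}\geq 0$ for all $(x,x^*)\in T$; expanding and using that $T$ is monotone gives control on the cross terms. The natural move is to scale: because $T$ is linear, for any $(x,x^*)\in T$ and any $t\in\R$ the pair $t(x,x^*)$ also lies in $T$, and similarly the linearity of $S$ lets me scale $(y,y^*)$. First I would write out the monotonicity-type inequality coming from $(y,y^*)\in T^\mu$ tested against $tx$-type elements of $T$, collect it as a quadratic polynomial in $t$, and read off the sign condition. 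Concretely, testing $(y,y^*)\in T^\mu$ against $(tx,tx^*)\in T$ yields $\inner{tx-y}{tx^*-y^*}\geq 0$, i.e. a quadratic in $t$ of the form $t^2\inner{x}{x^*}-t(\inner{x}{y^*}+\inner{y}{x^*})+\inner{y}{y^*}\geq 0$ for all $t$. The nonnegativity of this quadratic for all real $t$ is exactly the kind of leverage used in Lemma~\ref{lem:domt}.

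To finish I would instead scale $(y,y^*)$ itself: since $S$ is linear, $s(y,y^*)\in S\subset T^\mu$ for every $s\in\R$, so testing against a \emph{fixed} $(x,x^*)\in T$ gives $\inner{x-sy}{x^*-sy^*}\geq 0$, a quadratic $s^2\inner{y}{y^*}-s(\inner{x}{y^*}+\inner{y}{x^*})+\inner{x}{x^*}\geq 0$ that must hold for all $s\in\R$. A quadratic $as^2+bs+c$ nonnegative on all of $\R$ forces $a\geq 0$ (otherwise it tends to $-\infty$), and here the leading coefficient is precisely $a=\inner{y}{y^*}$. Hence $\inner{y}{y^*}\geq 0$, and since $(y,y^*)\in S$ was arbitrary and $S$ is linear, $S$ is monotone.

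The only delicate point is the degenerate case where the leading coefficient vanishes or where one must ensure the quadratic argument is applied to the right variable; I would be careful to scale the element of $S$ (not of $T$) so that $\inner{y}{y^*}$ appears as the coefficient of $s^2$, which is what the linearity of $S$ guarantees. I do not expect any genuine obstacle here: the whole content is the elementary fact that a real quadratic bounded below on all of $\R$ has nonnegative leading coefficient, combined with the scaling freedom that linearity of both $S$ and $T$ provides.
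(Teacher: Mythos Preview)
Your argument is correct, but it is more elaborate than necessary compared with the paper's proof. The paper simply observes that since $T$ is linear, $(0,0)\in T$; then for any $(y,y^*)\in S\subset T^\mu$, testing the monotone-polar inequality against the single element $(0,0)\in T$ already gives $\inner{y}{y^*}=\inner{y-0}{y^*-0}\geq 0$, and the linear-operator monotonicity criterion finishes the proof in one line. Your scaling-and-quadratic approach works, but the quadratic trick is doing nothing here: if you had chosen the test element $(x,x^*)=(0,0)\in T$, your quadratic would collapse to $s^2\inner{y}{y^*}\geq 0$, which is exactly the paper's observation. In other words, the freedom to scale $(y,y^*)$ via the linearity of $S$ is never needed---the linearity of $T$ (through $(0,0)\in T$) already delivers the conclusion directly. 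Both arguments in fact use neither the hypothesis $T\subset S$ nor the monotonicity of $T$; only $S$ linear, $S\subset T^\mu$, and $(0,0)\in T$ are used.
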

\begin{proof}
Take $(x,x^*)\in S$, then $(x,x^*)\in T^{\mu}$ and, since $(0,0)\in T$, 
\[
\inner{x}{x^*}=\inner{x-0}{x^*-0}\geq 0.
\]
As $(x,x^*)\in S$ was arbitrary, we conclude that $S$ is monotone. 
\end{proof}

From the previous lemma, note that the linearity of $T^{\mu}$ implies pre-maximal monotonicity of $T$. We now generalize this fact to linear $p$-monotone operators.

\begin{proposition}
Let $T:X\tos X^*$ be a linear $p$-monotone operator with closed domain. Then $T$ is pre-maximal $p$-monotone if, and only if, $T^{\mu_p}$ is linear. 
\end{proposition}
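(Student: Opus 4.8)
The plan is to introduce the auxiliary operator $R=T+N_{\dom(T)}$ and use it as a bridge between the two conditions. Since $T$ is linear with closed domain, $\dom(T)$ is a closed subspace and hence $\co(\dom(T))=\dom(T)$, so Example~\ref{ex:rlinmax} applies verbatim: $R$ is linear, $p$-monotone, and \emph{simultaneously} maximal monotone and maximal $p$-monotone, with $\gra(R)=\gra(T)+(\{0\}\times\dom(T)^{\bot})$. Moreover, item~\emph{5} of Proposition~\ref{p-monotone-relation} gives $R=T+N_{\dom(T)}\subset T^{\mu_p}$. Thus we always have the chain $T\subset R\subset T^{\mu_p}$ with $R$ linear and maximal in both senses, and the whole statement reduces to deciding when the inclusion $R\subset T^{\mu_p}$ is an equality.

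For the forward implication I would argue as follows. If $T$ is pre-maximal $p$-monotone, then by definition $T^{\mu_p}$ is $p$-monotone, hence maximal $p$-monotone. Since $R$ is maximal $p$-monotone and $R\subset T^{\mu_p}$ with $T^{\mu_p}$ $p$-monotone, maximality of $R$ forces $R=T^{\mu_p}$. As $R$ is linear, so is $T^{\mu_p}$, which is the desired conclusion.

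For the reverse implication, suppose $T^{\mu_p}$ is linear. Because $T$ is linear and monotone and, by iterating item~\emph{1} of Proposition~\ref{p-monotone-relation}, $T\subset T^{\mu_p}\subset T^{\mu_1}=T^{\mu}$, Lemma~\ref{mono-linear-included} applies with $S=T^{\mu_p}$ and yields that $T^{\mu_p}$ is monotone. Now $R$ is maximal monotone and $R\subset T^{\mu_p}$ with $T^{\mu_p}$ monotone, so maximal monotonicity of $R$ forces $R=T^{\mu_p}$. Since $R$ is $p$-monotone, $T^{\mu_p}$ is $p$-monotone; therefore $T^{\mu_p}$ is maximal $p$-monotone and $T$ is pre-maximal $p$-monotone.

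I expect the main obstacle to lie in the reverse direction. Linearity of $T^{\mu_p}$ together with Lemma~\ref{mono-linear-included} delivers only \emph{monotonicity} of $T^{\mu_p}$, which is strictly weaker than the $p$-monotonicity actually needed (indeed, the example $T=\{(0,0)\}$ with $p=2$ shows $T^{\mu_p}$ can be monotone-looking yet fail $p$-monotonicity). The essential trick is that the operator $R$, already known to be maximal monotone and trapped below $T^{\mu_p}$, collapses this gap: the maximal \emph{monotone} equality $R=T^{\mu_p}$ transports the $p$-monotonicity of $R$ upward to $T^{\mu_p}$. Recognizing $R$ as the correct bridging operator, and realizing that its maximal monotonicity (not merely its maximal $p$-monotonicity) is what resolves the reverse direction, is the crux of the proof.
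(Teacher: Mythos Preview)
Your proof is correct and follows essentially the same route as the paper: introduce $R=T+N_{\dom(T)}$ from Example~\ref{ex:rlinmax} as a linear maximal monotone and maximal $p$-monotone extension sitting between $T$ and $T^{\mu_p}$, then use maximality of $R$ (in the $p$-monotone sense for the forward direction, in the monotone sense after invoking Lemma~\ref{mono-linear-included} for the reverse) to force $R=T^{\mu_p}$. The only cosmetic difference is that for the forward direction the paper phrases it as ``$R$ must be the unique maximal $p$-monotone extension, hence $T^{\mu_p}=R$,'' whereas you argue via $R\subset T^{\mu_p}$ with $T^{\mu_p}$ $p$-monotone and $R$ maximal $p$-monotone; these are the same observation.
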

\begin{proof}
As $T$ has closed domain, by Example~\ref{ex:rlinmax}, $R=T+N_D$ is a linear maximal $p$-monotone extension of $T$. Even more, $R$ is also maximal monotone.
	
Assume that $T$ is pre-maximal $p$-monotone. Then $R$ must be the unique maximal $p$-monotone extension of $T$. Therefore $T^{\mu_p}=R$ is linear.

Conversely, assume that $T^{\mu_p}$ is linear. Since $T\subset T^{\mu_p}\subset T^{\mu}$, by Lemma~\ref{mono-linear-included}, $T^{\mu_p}$ is monotone. On the other hand, $R\subset T^{\mu_p}$ and, since $R$ is maximal monotone, we have $R=T^{\mu_p}$ and $T^{\mu_p}$ is $p$-monotone. Therefore $T$ is pre-maximal $p$-monotone. 
\end{proof}
\begin{corollary}
Let $T:X\tos X^*$ be a linear $p$-monotone operator with closed domain and let $q<p$. If $T$ is pre-maximal $q$-monotone then it is pre-maximal $p$-monotone.
\end{corollary}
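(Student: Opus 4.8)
The plan is to reduce the statement to the linearity criterion established in the preceding proposition: since $T$ is linear, $p$-monotone and has closed domain, $T$ is pre-maximal $p$-monotone if and only if $T^{\mu_p}$ is linear. Thus the entire task becomes showing that $T^{\mu_p}$ is a linear subspace, and I intend to do this by identifying $T^{\mu_p}$ explicitly with the operator $R=T+N_D$ of Example~\ref{ex:rlinmax}, where $D=\dom(T)$. Note first that $T\subset R$, since $0\in N_D(x)$ for every $x\in D$.

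First I would record the relevant properties of $R$. By Example~\ref{ex:rlinmax}, $R$ is linear, maximal monotone and maximal $p$-monotone; the same reasoning (Theorem~\ref{max-carac-cero} together with Proposition~\ref{pro:pmonqmon}) shows that $R$ is maximal $q$-monotone as well. In particular $R$ is linear and $R=R^{\mu_p}$. Next I would establish the inclusion $R\subset T^{\mu_p}$: from $T\subset R$, item~2 of Proposition~\ref{p-monotone-relation} gives $R^{\mu_p}\subset T^{\mu_p}$, and maximal $p$-monotonicity of $R$ then yields $R=R^{\mu_p}\subset T^{\mu_p}$.

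For the reverse inclusion I would bring in the hypothesis $q<p$ together with pre-maximal $q$-monotonicity. Iterating item~1 of Proposition~\ref{p-monotone-relation} from order $q$ up to order $p$ gives $T^{\mu_p}\subset T^{\mu_q}$. It then remains to identify $T^{\mu_q}$ with $R$: because $R$ is a maximal $q$-monotone extension of $T$, item~1 of Proposition~\ref{polar+maximal-p} gives $R\subset T^{\mu_q}$; and since $T$ is pre-maximal $q$-monotone, $T^{\mu_q}$ is itself ($q$-monotone, hence) maximal $q$-monotone, so the maximal operator $R$ cannot be properly contained in it, forcing $T^{\mu_q}=R$. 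Combining these facts gives $T^{\mu_p}\subset T^{\mu_q}=R$.

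The two inclusions yield $T^{\mu_p}=R$, which is linear, and the preceding proposition then delivers the pre-maximal $p$-monotonicity of $T$. I expect the only delicate step to be the identity $T^{\mu_q}=R$, since this is precisely where the pre-maximal $q$-monotone hypothesis is consumed and where one must be careful that $R$ serves simultaneously as a maximal extension at both orders $q$ and $p$; once that sandwiching $R\subset T^{\mu_p}\subset T^{\mu_q}=R$ is in place, everything else is a formal manipulation of the polar inclusions.
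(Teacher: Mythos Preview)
Your proof is correct and follows essentially the same line as the paper's: both introduce $R=T+N_D$, establish the chain $R\subset T^{\mu_p}\subset T^{\mu_q}$, then collapse it by maximality of $R$ and conclude via the linearity criterion of the preceding proposition. The only cosmetic difference is that the paper squeezes using the maximal \emph{monotonicity} of $R$ against the (mere) monotonicity of $T^{\mu_q}$, whereas you squeeze one level up, using the maximal $q$-monotonicity of $R$ against the $q$-monotonicity of $T^{\mu_q}$; both arguments yield $R=T^{\mu_q}=T^{\mu_p}$.
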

\begin{proof}
Consider $R=T+N_D$ as in Example~\ref{ex:rlinmax}, which is maximal monotone. As $T$ is $p$-monotone and pre-maximal $q$-monotone, $T^{\mu_q}$ is $q$-monotone (hence monotone) and
\[
R\subset T^{\mu_p}\subset T^{\mu_q}.
\]
Therefore $R=T^{\mu_q}=T^{\mu_p}$ is linear and $T$ is pre-maximal $p$-monotone.
\end{proof}

We conclude this section with the following Theorem, which relates to Theorem~4.2 in \cite{BauWanYao2009} (see also Lemma~1.2 in \cite{BM3}.)
\begin{theorem}
Let $T:X\tos X^*$ be a maximal $p$-monotone operator with convex graph. Then $T$ is affine linear. Moreover, if $\dom(T)$ is closed then $T$ is maximal monotone.
\end{theorem}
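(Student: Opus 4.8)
The plan is to prove the two assertions separately, exploiting the convexity of the graph together with the identity $T = T^{\mu_p}$ that characterizes maximal $p$-monotonicity. For the first claim, that $T$ is affine linear, I would reduce to the case $(0,0)\in\gra(T)$ by a translation, which is harmless since Proposition~\ref{p-monotone-relation}, item~\textit{6}, shows the $p$-polar commutes with translations and convexity of the graph is translation-invariant. Concretely, fix $(u,u^*)\in\gra(T)$ and replace $T$ by $T-(u,u^*)$; this operator is again maximal $p$-monotone with convex graph and now contains $(0,0)$. It then suffices to show its graph is a linear subspace, since translating back converts a linear subspace into an affine one.

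Once $(0,0)\in\gra(T)$, I would invoke Corollary~\ref{cor:convexaffine} (or its predecessor): for a $p$-monotone operator with convex graph we have $\aff(\gra(T))\subset\gra(T^{\mu_p})$. But $T$ is maximal $p$-monotone, so $T^{\mu_p}=T$, giving $\aff(\gra(T))\subset\gra(T)$. Since the reverse inclusion $\gra(T)\subset\aff(\gra(T))$ is automatic, we conclude $\gra(T)=\aff(\gra(T))$, i.e.\ the graph equals its own affine hull and is therefore an affine subspace. Because it contains $(0,0)$, it is in fact a linear subspace, so $T$ is linear; undoing the translation, the original $T$ is affine linear. This is the cleanest route, and it makes the first half essentially a corollary of the convex-graph machinery already developed.

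For the second assertion, suppose in addition that $\dom(T)$ is closed. Having established that $T$ is affine linear, I would again translate so that $T$ is linear (one checks $\dom(T)$ remains closed under translation, since translating the graph translates the domain by a fixed vector). Now I want to apply Theorem~\ref{max-carac-cero}. By item~\textit{1} of that theorem, maximal $p$-monotonicity of the linear operator $T$ yields $T(0)=\dom(T)^{\bot}$. Then item~\textit{2}, whose hypotheses are precisely that $T$ is $p$-monotone with closed domain and $T(0)=\dom(T)^{\bot}$, gives that $T$ is maximal monotone. Translating back, the original affine operator inherits maximal monotonicity since monotonicity and maximality are preserved under the translation $(x,x^*)\mapsto(x,x^*)+(u,u^*)$ (this uses Proposition~\ref{p-monotone-relation}, item~\textit{6}, applied with $p=1$).

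The step I expect to require the most care is the bookkeeping around translations: I must verify that the translate of a maximal $p$-monotone operator is again maximal $p$-monotone (not merely $p$-monotone), that convexity of the graph and closedness of the domain survive, and that the linear-algebra facts invoked for the untranslated operator transfer back correctly. The substantive mathematical content, however, is entirely packaged in Corollary~\ref{cor:convexaffine} and Theorem~\ref{max-carac-cero}; once the translation reductions are in place the result follows without further hard estimates. A minor point to confirm is that $\dom(T)^{\bot}$ behaves well under the reduction, but since $\dom$ shifts by a constant vector this is routine.
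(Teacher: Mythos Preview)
Your proposal is correct and follows essentially the same route as the paper: apply Corollary~\ref{cor:convexaffine} to get $\aff(\gra(T))\subset\gra(T^{\mu_p})=\gra(T)$, hence $T$ is affine linear, and then translate to the origin and invoke Theorem~\ref{max-carac-cero} for the maximal monotonicity under the closed-domain hypothesis. The only cosmetic difference is that your translation in the first step is unnecessary, since Corollary~\ref{cor:convexaffine} already applies to arbitrary $p$-monotone operators with convex graph without requiring $(0,0)\in\gra(T)$.
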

\begin{proof}
By Corollary~\ref{cor:convexaffine}, $T\subset \aff(T)\subset T^{\mu_p}=T$. Therefore $T=\aff(T)$, that is, $T$ is affine linear. If $\dom(T)$ is closed, translating $T$ to the origin if necessary, Theorem~\ref{max-carac-cero} implies that $T$ is maximal monotone.
\end{proof}

\section{The Br\'ezis-Browder Theorem}
From now on, we will assume that $X$ is a \emph{reflexive} Banach space and we will identify the \emph{bidual} space $X^{**}$ with $X$.

Given $p\in\N$, consider $X_p$ to be the cartesian product of $X$ taken $p+1$ times, namely 
\[
X_p=\prod_{i=0}^p X,
\]
and endow $X_p$ with the norm $\|(x_0,\ldots,x_p)\|_p=(\|x_0\|^2+\cdots+\|x_p\|^2)^{1/2}$.
Moreover, identify $X_p^*=(X_p)^*=\displaystyle\prod_{i=0}^p X^*$.  The duality coupling between $X_p$ and $X_p^*$ is given by
\[
\inner{\bar x}{\bar x^*}_p=\sum_{i=0}^p\inner{x_i}{x_i^*},
\]
where $\bar x=(x_0,\ldots,x_p)\in X_p$ and $\bar x^*=(x_0^*,\ldots,x_p^*)\in X_p^*$. Note that $X_p$ is reflexive, since $X$ is.

Let $T:X\tos X^*$ be a multivalued operator. Denote by $T_{p\pm}:X_p\tos X_p^*$ to the multivalued operators defined by
\[
T_{p\pm}(x_0,\dots,x_p)=\prod_{i=0}^p T(x_i)-T(x_{i\pm 1}),
\]
considering $x_{p+1}=x_{-1}=x_p$. Equivalently, $(\bar x,\bar x^*)\in T_{p\pm}$ if, and only if, for all $i\in\{0,\ldots,p\}$, there exist $u_i^*,v_i^*\in T(x_i)$ such that
\[
x_i^*=u_i^*-v_{i\pm 1}^*.
\]
It is clear from the definition of $T_{p\pm}$ that $\dom(T_{p\pm})=\displaystyle\prod_{i=0}^p\dom(T)$.
\begin{proposition}\label{pro:tplinear}
Let $T:X\tos X^*$ be a linear multivalued operator. Then $T_{p\pm}$ is linear. Moreover, $(\bar x,\bar x^*)\in T_{p\pm}$ if, and only if, for all $i\in\{0,\ldots,p\}$, there exist $z_i^*\in T(x_i)$, $\alpha_i^*\in T(0)$, such that
\[
x_i^*=z_i^*-z_{i\pm 1}^*+\alpha_i^*.
\]
\end{proposition}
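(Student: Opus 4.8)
The plan is to prove the two assertions of Proposition~\ref{pro:tplinear} in sequence: first that $T_{p\pm}$ is a linear operator whenever $T$ is, and then the equivalent description of its graph in terms of selections $z_i^* \in T(x_i)$ together with ``correction terms'' $\alpha_i^* \in T(0)$. Throughout I would use the basic structural fact that for a linear operator $T$, if $u^*, v^* \in T(x)$ then $u^* - v^* \in T(0)$, and conversely $T(x) = z^* + T(0)$ for any single selection $z^* \in T(x)$; this is just the statement that $\gra(T)$ is a linear subspace, so the fiber over $x$ is a coset of the fiber $T(0)$ over $0$.

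For linearity of $T_{p\pm}$, I would argue directly from the definition $\gra(T_{p\pm})$ is a linear subspace of $X_p \times X_p^*$. Take $(\bar x, \bar x^*), (\bar y, \bar y^*) \in T_{p\pm}$ and scalars $s,t \in \R$; I must show $s(\bar x, \bar x^*) + t(\bar y, \bar y^*) \in T_{p\pm}$. Unwinding the characterization, for each $i$ we have $x_i^* = u_i^* - v_{i\pm 1}^*$ and $y_i^* = a_i^* - b_{i\pm 1}^*$ with $u_i^*, v_i^* \in T(x_i)$ and $a_i^*, b_i^* \in T(y_i)$. Since $T$ is linear, $s u_i^* + t a_i^* \in T(s x_i + t y_i)$ and likewise for the $v,b$ selections, so setting these as the new selections at the point $s x_i + t y_i$ exhibits $s x_i^* + t y_i^* = (s u_i^* + t a_i^*) - (s v_{i\pm 1}^* + t b_{i\pm 1}^*)$ in the required form. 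This shows the graph is closed under linear combinations, hence $T_{p\pm}$ is linear.

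For the ``moreover'' part, the plan is to prove both directions of the equivalence. For the forward direction, suppose $(\bar x, \bar x^*) \in T_{p\pm}$, so $x_i^* = u_i^* - v_{i\pm 1}^*$ with $u_i^*, v_i^* \in T(x_i)$. I would fix an arbitrary selection $z_i^* \in T(x_i)$ for each $i$ and write $u_i^* = z_i^* + \beta_i^*$ and $v_i^* = z_i^* + \gamma_i^*$ where $\beta_i^*, \gamma_i^* \in T(0)$ by linearity. Substituting gives $x_i^* = z_i^* + \beta_i^* - z_{i\pm 1}^* - \gamma_{i\pm 1}^* = z_i^* - z_{i\pm 1}^* + \alpha_i^*$ with $\alpha_i^* := \beta_i^* - \gamma_{i\pm 1}^* \in T(0)$, since $T(0)$ is a linear subspace. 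For the converse, given $x_i^* = z_i^* - z_{i\pm 1}^* + \alpha_i^*$ with $z_i^* \in T(x_i)$ and $\alpha_i^* \in T(0)$, I would set $u_i^* = z_i^* + \alpha_i^*$ and $v_i^* = z_i^*$; then $u_i^*, v_i^* \in T(x_i)$ (again using $T(x_i) + T(0) = T(x_i)$), and $u_i^* - v_{i\pm 1}^* = z_i^* + \alpha_i^* - z_{i\pm 1}^* = x_i^*$, placing $(\bar x, \bar x^*)$ in $T_{p\pm}$ by the original characterization.

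I do not anticipate a serious obstacle here, as the result is essentially bookkeeping once the coset structure $T(x) = z^* + T(0)$ is in hand; the only mild subtlety is handling the index shift $i \pm 1$ correctly (with the convention $x_{p+1} = x_{-1} = x_p$) so that the correction terms $\alpha_i^*$ are indexed at $i$ rather than at the shifted position, which is exactly what the forward substitution achieves by absorbing $\gamma_{i\pm 1}^*$ into $\alpha_i^*$. The main thing to be careful about is ensuring the $\pm$ case is treated uniformly so that a single argument covers both $T_{p+}$ and $T_{p-}$ simultaneously.
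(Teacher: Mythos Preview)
The paper states this proposition without proof, evidently regarding it as routine; your proposal correctly supplies the omitted details. The argument is sound: linearity of $T_{p\pm}$ follows by building selections for a linear combination out of those for the summands, and the equivalent characterization is obtained from the coset identity $T(x)=z^*+T(0)$, which lets you collapse the two independent selections $u_i^*,v_i^*\in T(x_i)$ into a single $z_i^*$ together with a correction $\alpha_i^*\in T(0)$.
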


The following Lemma can be found in~\cite[Proposition 2.1]{JC}. Although it is presented in a finite dimensional setting, the proof is completely analogous.
\begin{lemma}\label{lem:Tinverse}
Let $T:X\tos X^*$ be a multivalued operator. Then $T$ is $p$-monotone if, and only if, $T^{-1}$ is $p$-monotone.
\end{lemma}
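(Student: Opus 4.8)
The plan is to prove the equivalence directly from the definition of $p$-monotonicity via a symmetric cyclic-sum computation, exploiting the duality product symmetry $\inner{x}{x^*}=\inner{x^*}{x}$ that holds in the reflexive setting where we identify $X^{**}=X$. Recall that $T$ is $p$-monotone if and only if, for every family $\{(x_i,x_i^*)\}_{i=0}^p\subset T$ with the cyclic convention $x_{p+1}=x_0$, we have $\sum_{i=0}^p\inner{x_{i+1}-x_i}{x_i^*}\leq 0$. The graph of $T^{-1}$ consists of the pairs $(x^*,x)$ for $(x,x^*)\in T$, so $p$-monotonicity of $T^{-1}$ reads $\sum_{i=0}^p\inner{x_{i+1}^*-x_i^*}{x_i}\leq 0$ over the same families, again cyclically with $x_{p+1}^*=x_0^*$.

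The key step is to show that these two cyclic sums are in fact negatives of each other (up to the symmetry of the pairing), so that the inequality $\leq 0$ for one is equivalent to $\geq$-type information that, combined with a reindexing, yields the inequality for the other. Concretely, I would expand
\begin{align*}
\sum_{i=0}^p\inner{x_{i+1}-x_i}{x_i^*}+\sum_{i=0}^p\inner{x_{i+1}^*-x_i^*}{x_i}
&=\sum_{i=0}^p\big(\inner{x_{i+1}}{x_i^*}-\inner{x_i}{x_i^*}+\inner{x_{i+1}^*}{x_i}-\inner{x_i^*}{x_i}\big).
\end{align*}
Using symmetry of the pairing and a shift of index in the cyclic sum (replacing $i$ by $i-1$ in the appropriate telescoping pieces), the cross terms $\inner{x_{i+1}}{x_i^*}$ and $\inner{x_{i+1}^*}{x_i}$ should cancel against each other across the two sums, leaving the combined expression equal to zero. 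Thus $\sum_{i=0}^p\inner{x_{i+1}^*-x_i^*}{x_i}=-\sum_{i=0}^p\inner{x_{i+1}-x_i}{x_i^*}$, which would immediately give the wrong sign unless the reindexing is done carefully — so the real content is to verify that reversing the cyclic order of the family, i.e. relabeling $(x_i,x_i^*)\mapsto(x_{p-i},x_{p-i}^*)$, turns one cyclic sum into the other with the correct sign.

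The cleanest route, and the one I would actually carry out, is therefore: given $\{(x_i,x_i^*)\}_{i=0}^p\subset T^{-1}$, reverse the indexing to $(y_j,y_j^*)=(x_{p-j}^*,x_{p-j})$ so that $\{(y_j,y_j^*)\}_{j=0}^p\subset T$, and then check by direct substitution that $\sum_{j=0}^p\inner{y_{j+1}-y_j}{y_j^*}$ equals $\sum_{i=0}^p\inner{x_{i+1}^*-x_i^*}{x_i}$ after using symmetry of $\pi$. This reduces $p$-monotonicity of $T^{-1}$ to $p$-monotonicity of $T$ evaluated on the reversed family, and by symmetry the converse follows identically (since $(T^{-1})^{-1}=T$). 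The main obstacle I expect is purely bookkeeping: getting the cyclic index shift and the order-reversal to line up so that the sign comes out as $\leq 0$ rather than $\geq 0$; once the correct relabeling is fixed, the computation is a routine telescoping verification requiring only the symmetry of the duality coupling, which is available because $X$ is reflexive.
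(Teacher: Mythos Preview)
The paper does not give its own proof of this lemma; it simply cites \cite[Proposition~2.1]{JC} and remarks that the finite-dimensional argument there carries over verbatim. So there is nothing in the paper to compare against beyond that pointer.

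Your index-reversal approach is correct and is the standard way to prove this: given $\{(x_i,x_i^*)\}_{i=0}^p\subset T$, set $(y_j,y_j^*)=(x_{p-j},x_{p-j}^*)$ and apply $p$-monotonicity of $T$ to the $y$'s; after the substitution $k=p-j$ and one cyclic index shift you obtain exactly $\sum_i\inner{x_i}{x_{i+1}^*-x_i^*}\leq 0$, which (via the symmetry of the pairing available under reflexivity) is the $p$-monotonicity inequality for $T^{-1}$. The converse follows from $(T^{-1})^{-1}=T$.

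Two small points. First, your exploratory identity ``$\sum_i\inner{x_{i+1}-x_i}{x_i^*}+\sum_i\inner{x_{i+1}^*-x_i^*}{x_i}=0$'' is not merely wrong-signed --- it is false in general for $p>1$. The cross terms $\sum_i\inner{x_{i+1}}{x_i^*}$ and $\sum_i\inner{x_i}{x_{i+1}^*}$ do not cancel; the paper's Remark immediately following the lemma gives an explicit $p=2$ counterexample showing the two cyclic sums are neither equal nor negatives of each other. You were right to abandon that route. Second, in your final paragraph you switch midstream to taking $(x_i,x_i^*)\in T^{-1}$ but then write the target sum as $\sum_i\inner{x_{i+1}^*-x_i^*}{x_i}$, which is the expression from the earlier convention $(x_i,x_i^*)\in T$; with the $T^{-1}$ convention the target should read $\sum_i\inner{x_{i+1}-x_i}{x_i^*}$. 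This is purely notational and does not affect the validity of the argument --- exactly the bookkeeping hazard you anticipated.
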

\begin{remark}
The previous lemma is trivial for the monotone case, since, given $(x_0,x_0^*)$, $(x_1,x_1^*)\in X\times X^*$,
\[
\sum_{i=0}^1\inner{x_{i+1}-x_i}{x^*_i}=\sum_{i=0}^1\inner{x_i}{x^*_{i+1}-x^*_i}=-\inner{x_0-x_1}{x_0^*-x_1^*}.
\]
The first equality in the previous equation, however, is not true in general when $p>1$. Consider for instance $p=2$, $(x_0,x_0^*)=(0,0)$, $(x_1,x_1^*)=(1,1)$ and $(x_2,x_2^*)=(-1,1)$ in $\R^2$. Then
\[
\sum_{i=0}^p\inner{x_{i+1}-x_{i}}{x^*_{i}}=-1\leq 0,\quad\text{ but }\quad \sum_{i=0}^p\inner{x_i}{x^*_{i+1}-x^*_i}=1>0.
\]
\end{remark}

The starting point of our current analysis is the following proposition.
\begin{proposition}\label{uno}
Let $T:X\tos X^*$ be a multivalued operator. If either $T_{p+}$ or $T_{p-}$ is monotone then $T$ is $p$-monotone. 
\end{proposition}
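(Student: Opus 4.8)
The plan is to extract, from an arbitrary family $\{(x_i,x_i^*)\}_{i=0}^p\subset T$, two carefully chosen points of the product-space operator, feed them into its monotonicity inequality, and let the resulting expression telescope into the $p$-monotone sum. I will treat the $T_{p-}$ case directly and reduce the $T_{p+}$ case to it through the inverse.

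Assume $T_{p-}$ is monotone and fix $\{(x_i,x_i^*)\}_{i=0}^p\subset T$. Set $\bar x=(x_0,\dots,x_p)$ and let $\bar x^*\in X_p^*$ be the vector whose $i$-th block is $x_i^*-x_{i-1}^*$, reading $x_{-1}^*=x_p^*$ in accordance with $x_{-1}=x_p$; since $x_i^*\in T(x_i)$ and $x_{i-1}^*\in T(x_{i-1})$, this block lies in $T(x_i)-T(x_{i-1})$, so $(\bar x,\bar x^*)\in T_{p-}$. As a second point take the constant tuple $\bar y=(x_0,\dots,x_0)$ together with $\bar 0\in X_p^*$, which belongs to $T_{p-}$ because $0\in T(x_0)-T(x_0)$ in every block. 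The monotonicity inequality $\inner{\bar x-\bar y}{\bar x^*-\bar 0}_p\ge 0$ then reads $\sum_{i=0}^p\inner{x_i-x_0}{x_i^*-x_{i-1}^*}\ge 0$.

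The computational crux is the telescoping identity $\sum_{i=0}^p(x_i^*-x_{i-1}^*)=x_p^*-x_{-1}^*=0$, which makes the contribution of the constant tuple $\bar y$ vanish and leaves $\sum_{i=0}^p\inner{x_i}{x_i^*-x_{i-1}^*}\ge 0$. Expanding and reindexing the double sum (using $x_{-1}^*=x_p^*$ once more) turns this into $\sum_{i=0}^p\inner{x_{i+1}-x_i}{x_i^*}\le 0$ with $x_{p+1}=x_0$, which is exactly $p$-monotonicity of $T$.

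For the $T_{p+}$ case the symmetric construction, with blocks $x_i^*-x_{i+1}^*\in T(x_i)-T(x_{i+1})$, produces instead the transposed sum $\sum_{i=0}^p\inner{x_i}{x_{i+1}^*-x_i^*}\le 0$. Here lies the main obstacle: as the Remark preceding this proposition shows, the duality product does not symmetrize for $p>1$, so this transposed sum is \emph{not} the $p$-monotone sum of $T$. It is, however, precisely the $p$-monotone inequality for the inverse $T^{-1}$ (whose points are the pairs $(x_i^*,x_i)$, under the reflexive identification $X^{**}=X$ in force in this section), so $T^{-1}$ is $p$-monotone and Lemma~\ref{lem:Tinverse} yields that $T$ is $p$-monotone. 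Thus the role of the inverse lemma is exactly to bridge the $+$ and $-$ constructions that the failure of symmetry keeps apart.
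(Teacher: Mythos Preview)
Your proof is correct and follows essentially the same route as the paper: build two points of $T_{p\pm}$ from the given cycle and a constant tuple, apply monotonicity, let the constant term telescope away, and then (in the $T_{p+}$ case) invoke Lemma~\ref{lem:Tinverse} to pass from $p$-monotonicity of $T^{-1}$ to that of $T$. The only cosmetic difference is that the paper takes an arbitrary $(a,a^*)\in T$ for the constant tuple whereas you use $(x_0,x_0^*)$, which is of course a valid choice of $a$.
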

\begin{proof}
Let $\{(x_i,x_i^*)\}_{i=0}^{p}\subset T$ and define
\[
\bar x=(x_0,\ldots, x_p)\in X_p,\qquad \bar x^*=(x_i-x_{i\pm 1})_{i=0}^p
\in X_p^*.
\]
Clearly $(\bar x,\bar x^*)\in T_{p\pm}$. In addition, fix $(a,a^*)\in T$ and define $\bar y=(a,\ldots,a)\in X_p$ and $\bar y^*=(0,\ldots,0)\in X_p$. Then $(\bar y,\bar y^*)\in T_{p\pm}$ also, and
\begin{equation}\label{eq:prouno}
\inner{\bar x-\bar y}{\bar x^*-\bar y^*}_p=\sum_{i=0}^p\inner{x_i-a}{x_i^*-x_{i\pm1}^*}=\sum_{i=0}^p\inner{x_i}{x_i^*-x_{i\pm 1}^*},
\end{equation}
since $\displaystyle\sum_{i=0}^p\inner{a}{x_i^*-x_{i\pm1}^*}=0$. 

If $T_{p-}$ is monotone, using~\eqref{eq:prouno}, we obtain
\[
\sum_{i=0}^p\inner{x_{i+1}-x_i}{x_i^*}=-\sum_{i=0}^p\inner{x_i}{x_i^*-x_{i-1}^*}=-\inner{\bar x-\bar y}{\bar x^*-\bar y^*}_p\leq 0.
\]
Therefore $T$ is $p$-monotone. On the other hand if $T_{p+}$ is monotone, using again~\eqref{eq:prouno} we obtain 
\[
\sum_{i=0}^p\inner{x_i}{x^*_{i+1}-x^*_i}=-\sum_{i=0}^p\inner{x_i}{x_i^*-x_{i+1}^*}=-\inner{\bar x-\bar y}{\bar x^*-\bar y^*}_p\leq 0.
\]
Hence $T^{-1}$  is $p$-monotone. By Lemma~\ref{lem:Tinverse}, $T$ is $p$-monotone.
\end{proof}
The converse implication of the previous proposition can be obtained under linearity of the operator.
\begin{proposition}
Let $T:X\tos X^*$ be a linear multivalued operator. If $T$ is $p$-monotone then $T_{p+}$ and $T_{p-}$ are monotone.
\end{proposition}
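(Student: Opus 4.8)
The plan is to leverage the linearity of $T_{p\pm}$, established in Proposition~\ref{pro:tplinear}, to reduce the claim to a sign condition on the duality coupling. Since $T_{p\pm}$ is a linear operator from $X_p$ to $X_p^*$, by the criterion recalled in the introduction it is monotone precisely when $\inner{\bar x}{\bar x^*}_p\geq 0$ for every $(\bar x,\bar x^*)\in T_{p\pm}$. By the representation in Proposition~\ref{pro:tplinear}, each such pair satisfies $x_i^*=z_i^*-z_{i\pm1}^*+\alpha_i^*$ with $z_i^*\in T(x_i)$ and $\alpha_i^*\in T(0)$, so that
\[
\inner{\bar x}{\bar x^*}_p=\sum_{i=0}^p\inner{x_i}{z_i^*-z_{i\pm1}^*}+\sum_{i=0}^p\inner{x_i}{\alpha_i^*}.
\]
I would then control the two sums separately.

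For the first sum, after a cyclic reindexing (using the index conventions $x_{p+1}=x_0$ and $x_{-1}=x_p$, and correspondingly for the $z_i^*$), one finds in the minus case that $\sum_{i}\inner{x_i}{z_i^*-z_{i-1}^*}=-\sum_{i}\inner{x_{i+1}-x_i}{z_i^*}$. Since $(x_i,z_i^*)\in T$ and $T$ is $p$-monotone, the right-hand side is $\geq 0$. In the plus case, the same computation rewrites the first sum as (minus) the $p$-monotonicity expression for $T^{-1}$ evaluated at the pairs $(z_i^*,x_i)\in T^{-1}$; by Lemma~\ref{lem:Tinverse} the operator $T^{-1}$ is $p$-monotone, so this sum is again $\geq 0$. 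Thus in both cases the first sum is nonnegative.

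For the second sum, the key observation is that $T(0)\subset\dom(T)^{\bot}$. Indeed, $T$ being $p$-monotone it is monotone, hence $T\subset T^{\mu}$, so $T(0)\subset T^{\mu}(0)$; and Lemma~\ref{lem:domt} gives $T^{\mu}(0)\subset\dom(T)^{\bot}$. Since $\bar x\in\dom(T_{p\pm})=\prod_{i=0}^p\dom(T)$, each $x_i\in\dom(T)$, and therefore $\inner{x_i}{\alpha_i^*}=0$ for every $i$. Consequently the second sum vanishes, yielding $\inner{\bar x}{\bar x^*}_p\geq 0$ and hence the monotonicity of $T_{p\pm}$.

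I expect the main obstacle to be the treatment of the component $\alpha_i^*\in T(0)$: unlike the part governed by the $z_i^*$, it is not controlled by $p$-monotonicity alone, and its annihilation is exactly what forces the argument through $\dom(T)^{\bot}$ via Lemma~\ref{lem:domt} together with the inclusion $T\subset T^{\mu}$ (this is where the subtlety of multivaluedness at $0$ enters, and where linearity is genuinely used). The remaining difficulty is purely bookkeeping: ensuring that the cyclic index conventions for $T_{p+}$ and $T_{p-}$ align with the cyclic sum in the definition of $p$-monotonicity, and correctly invoking Lemma~\ref{lem:Tinverse} in the plus case.
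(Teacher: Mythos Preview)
Your proposal is correct and follows essentially the same route as the paper's proof: linearity of $T_{p\pm}$ via Proposition~\ref{pro:tplinear}, the decomposition $x_i^*=z_i^*-z_{i\pm1}^*+\alpha_i^*$, annihilation of the $\alpha_i^*$-terms through $T(0)\subset\dom(T)^{\bot}$, and the cyclic reindexing reducing the remaining sum to the $p$-monotonicity inequality for $T$ (minus case) or $T^{-1}$ (plus case, via Lemma~\ref{lem:Tinverse}). The only cosmetic difference is that the paper invokes $T(0)\subset\dom(T)^{\bot}$ directly, whereas you justify it through the chain $T(0)\subset T^{\mu}(0)\subset\dom(T)^{\bot}$ using Lemma~\ref{lem:domt}.
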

\begin{proof}
By Proposition~\ref{pro:tplinear}, both $T_{p+}$ and $T_{p-}$ are linear. Take $(\bar x,\bar x^*)\in T_{p\pm}$. Again by Proposition~\ref{pro:tplinear}, there exist $z_i^*\in T(x_i)$, $\alpha_i^*\in T(0)$ such that
\[
x_i^*=z_i^*-z_{i\pm 1}^*+\alpha_i^*.
\]
Hence
\begin{align*}
\inner{\bar x}{\bar x^*}_p&=\sum_{i=0}^p\inner{x_i}{x_i^*}\\
&=\sum_{i=0}^p\inner{x_i}{z_i^*-z_{i\pm 1}^*+\alpha_i^*}\\
&=\sum_{i=0}^p\inner{x_i}{z_i^*-z_{i\pm 1}^*},
\end{align*}
since $\alpha_i^*\in T(0)\subset \dom(T)^{\bot}$ and $x_i\in\dom(T)$. Monotonicity of $T_{p-}$ (respectively, of $T_{p+}$) follows directly from the $p$-monotonicity of $T$ (respectively $T^{-1}$).
\end{proof}

\begin{proposition}\label{pro:maxtp}
Let $T:X\tos X^*$ be a linear operator with closed domain. The following are equivalent
\begin{enumerate}
 \item $T$ is maximal $p$-monotone.
 \item $T_{p+}$ is maximal monotone.
 \item $T_{p-}$ is maximal monotone.
\end{enumerate}
\end{proposition}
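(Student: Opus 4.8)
The plan is to reduce the entire three-way equivalence to the ``value-at-zero'' characterization of maximal $p$-monotonicity supplied by Theorem~\ref{max-carac-cero}, and then to transport that characterization back and forth between $T$ and $T_{p\pm}$. The first thing I would check is that the hypotheses of Theorem~\ref{max-carac-cero} hold on both sides. Since $X$ is reflexive, so is $X_p$; since $T$ is linear with closed domain, Proposition~\ref{pro:tplinear} gives that $T_{p\pm}$ is linear, and from $\dom(T_{p\pm})=\prod_{i=0}^p\dom(T)$ its domain is closed as well. Thus both $T$ (on $X$) and $T_{p\pm}$ (on $X_p$) are linear operators with closed domain on reflexive spaces, so Theorem~\ref{max-carac-cero} applies to each of them, taking the order to be $1$ in the case of $T_{p\pm}$, since ``monotone'' means $1$-monotone. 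Combining its two items, for such an operator maximal $p$-monotonicity is equivalent to being $p$-monotone together with the condition that its value at $0$ coincides with the orthogonal complement of its domain.

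The proof then rests on two bridges. The first is the monotone/$p$-monotone correspondence already established in Proposition~\ref{uno} and the proposition immediately following it: for linear $T$, the operator $T$ is $p$-monotone if, and only if, $T_{p+}$ is monotone, if, and only if, $T_{p-}$ is monotone. The second bridge is a direct computation of the relevant ``zero data'' of $T_{p\pm}$. Using that $T(0)$ is a linear subspace (so that $T(0)-T(0)=T(0)$), one evaluates
\[
T_{p\pm}(0)=\prod_{i=0}^p\big(T(0)-T(0)\big)=\prod_{i=0}^p T(0),
\]
while, since each coordinate of $\dom(T_{p\pm})=\prod_{i=0}^p\dom(T)$ varies independently,
\[
\dom(T_{p\pm})^{\bot}=\prod_{i=0}^p\dom(T)^{\bot}.
\]
Comparing these two products factor by factor shows that $T_{p\pm}(0)=\dom(T_{p\pm})^{\bot}$ holds if, and only if, $T(0)=\dom(T)^{\bot}$.

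With both bridges in hand, the conclusion is a chain of equivalences: $T$ is maximal $p$-monotone if and only if $T$ is $p$-monotone and $T(0)=\dom(T)^{\bot}$; this in turn holds if and only if $T_{p\pm}$ is monotone and $T_{p\pm}(0)=\dom(T_{p\pm})^{\bot}$; and this last holds if and only if $T_{p\pm}$ is maximal monotone. The outer two equivalences are Theorem~\ref{max-carac-cero} applied respectively to $T$ and to $T_{p\pm}$, and the middle one is the combination of the two bridges; running the argument for both choices of sign yields all three equivalences simultaneously. The only genuinely computational point—hence the step I would treat most carefully—is the evaluation of $T_{p\pm}(0)$ and of $\dom(T_{p\pm})^{\bot}$, where linearity is what collapses $T(0)-T(0)$ to the subspace $T(0)$ and lets the domain annihilator split as a product; everything else is bookkeeping around previously proven results.
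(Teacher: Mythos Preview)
Your proof is correct and follows essentially the same approach as the paper: compute $T_{p\pm}(0)=\prod_{i=0}^p T(0)$ and $\dom(T_{p\pm})^{\bot}=\prod_{i=0}^p \dom(T)^{\bot}$, and then invoke Theorem~\ref{max-carac-cero} on both $T$ and $T_{p\pm}$ using the closedness of the domains. The paper's proof is terser and leaves the $p$-monotone/monotone bridge (Proposition~\ref{uno} and its converse for linear $T$) implicit, whereas you spell it out explicitly; this is appropriate, since item~\textit{2} of Theorem~\ref{max-carac-cero} requires $p$-monotonicity (respectively monotonicity) as a hypothesis before it can deliver maximality.
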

\begin{proof}
Note that, in general,
\[
T_{p\pm}(0)=\prod_{i=0}^p T(0),\qquad \dom(T_{p\pm})^{\bot}=\prod_{i=0}^p \dom(T)^{\bot}.
\]
The equivalence between {\it 1}, {\it 2} and {\it 3} follows from Theorem~\ref{max-carac-cero} and the closedness of the domains.
\end{proof}

The \emph{adjoint operator} associated to $T:X\tos X^*$ is the operator $T^*:X\tos X^*$ defined as
\begin{equation}\label{eq:adjoint}
(x,x^*)\in T^*\quad\iff\quad \inner{x}{y^*}=\inner{y}{x^*},\quad\forall(y,y^*)\in T.
\end{equation}
It is straightforward to verify that $T^*$ is linear and strongly closed on $X\times X^*$.

We now recall a classical result due to Brezis and Browder~\cite{}. 
\begin{theorem}\label{teo:bb}
Let $X$ be a reflexive Banach space and $T:X\tos X^*$ be a linear operator with closed graph. Then the following are equivalent:
\begin{enumerate}
 \item $T$ is maximal monotone.
 \item $T^*$ is maximal monotone.
 \item $T^*$ is monotone.
\end{enumerate}
\end{theorem}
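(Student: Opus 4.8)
The plan is to organize the three conditions around the symplectic duality between $T$ and $T^{*}$ and to reduce the whole statement to a single \emph{core equivalence}. Writing $G=\gra(T)$, the defining relation~\eqref{eq:adjoint} says precisely that $\gra(T^{*})$ is the annihilator of $G$ for the antisymmetric pairing $\omega\bigl((x,x^{*}),(y,y^{*})\bigr)=\inner{x}{y^{*}}-\inner{y}{x^{*}}$ on $X\times X^{*}$. Since $X$ is reflexive and $G$ is strongly closed, this pairing is nondegenerate and one obtains the double-adjoint identity $T^{**}=T$, which I would record first. Its purpose is to let me extract two implications from the single equivalence
\[
T \text{ maximal monotone}\iff T^{*}\text{ monotone},
\]
which is exactly $(1)\Leftrightarrow(3)$. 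Granting it, $(2)\Rightarrow(3)$ is immediate because maximal monotone operators are monotone, and $(1)\Rightarrow(2)$ follows by applying the core equivalence to $T^{*}$: since $(T^{*})^{*}=T$ is monotone whenever $T$ is maximal monotone, the equivalence read for $T^{*}$ yields that $T^{*}$ is maximal monotone.

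So the work is the core equivalence. Two linear-algebraic identities, both derived from~\eqref{eq:adjoint} and the closedness of $G$, drive the maximality bookkeeping: $T^{*}(0)=\dom(T)^{\bot}$ and $\dom(T^{*})^{\bot}=T(0)$. With these, the \emph{maximality} half of each direction is within reach of Theorem~\ref{max-carac-cero} whenever the relevant domain is closed: for instance, if $T$ is maximal monotone then \ref{max-carac-cero} gives $T(0)=\dom(T)^{\bot}$, whence $T^{*}(0)=\dom(T)^{\bot}=T(0)=\dom(T^{*})^{\bot}$, and \ref{max-carac-cero} applied to $T^{*}$ upgrades monotonicity of $T^{*}$ to maximal monotonicity. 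The residual, and genuinely analytic, content is the \emph{monotonicity} of the adjoint in $(1)\Rightarrow(3)$: given $(a,a^{*})\in T^{*}$ one must show $\inner{a}{a^{*}}\ge0$.

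The main obstacle is exactly this passage between a global extremal property (maximality) and a pointwise inequality for the adjoint, and it is where reflexivity is indispensable. The direct attempt, namely showing $(a,a^{*})\in T^{\mu_1}=T$ by expanding $\inner{a-y}{a^{*}-y^{*}}$ and using $\inner{a}{y^{*}}=\inner{y}{a^{*}}$, stalls: after optimizing over the line $\{t(y,y^{*})\}\subset T$ the resulting Cauchy--Schwarz-type bound already presupposes $\inner{a}{a^{*}}\ge0$. To break the circularity I would invoke a reflexivity-dependent surjectivity result, Rockafellar's theorem that $\ran(T+J)=X^{*}$ for a maximal monotone $T$ in a reflexive space, where $J\colon X\to X^{*}$ is the normalized duality mapping (equivalently, the Fitzpatrick-function characterization ``$T$ maximal monotone $\iff \F_{T,1}\ge\pi$ everywhere'' together with Fenchel--Rockafellar duality, which is legitimate precisely because $X$ is reflexive). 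This is also what removes the closed-domain hypothesis that Theorem~\ref{max-carac-cero} would otherwise require.

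With the surjectivity input in hand, the remaining steps are routine: monotonicity of $T^{*}$ in $(1)\Rightarrow(3)$ is obtained by solving $T+J$ against the given adjoint datum, the identity $T^{**}=T$ transports the same argument from $T$ to $T^{*}$, and the maximality conclusions are then closed off by the perp identities and Theorem~\ref{max-carac-cero} as above. I would therefore present the core equivalence as the heart of the proof and flag the surjectivity/duality step, not the algebra of the adjoint, as the one place where the hypotheses ``reflexive'' and ``closed graph'' are truly consumed.
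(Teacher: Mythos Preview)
The paper does not prove Theorem~\ref{teo:bb}. It is introduced with the phrase ``We now recall a classical result due to Brezis and Browder'' and is imported as an external black box (the citation in the source is in fact left blank), then used exactly once, in the proof of the final theorem. There is therefore no in-paper argument to compare your proposal against.

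As for your sketch on its own merits: the architecture is the standard one, and the reduction via $T^{**}=T$ to a single core implication is sound. Two points deserve flagging, however. First, you never actually execute the ``routine'' step that extracts $\inner{a}{a^{*}}\ge 0$ for $(a,a^{*})\in T^{*}$ from surjectivity of $T+J$; that computation is the analytic crux and should be written out, not waved at. Second, your plan to close the maximality upgrades via Theorem~\ref{max-carac-cero} is problematic here: item~\emph{2} of that theorem requires $\dom(T)$ to be closed, which is \emph{not} a hypothesis of Theorem~\ref{teo:bb}, and indeed part of the content of the Brezis--Browder result is that no such domain assumption is needed. You acknowledge this tension (``This is also what removes the closed-domain hypothesis\ldots'') but then immediately reinstate the appeal to Theorem~\ref{max-carac-cero} in the next paragraph. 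If you intend to supply a self-contained proof rather than a citation, that step must be replaced by an argument that does not presuppose closed domain; for the paper's purposes, though, none of this matters, since the theorem is quoted rather than proved.
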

Our goal is to prove a $p$-monotone version of this theorem. This was addressed previously in~\cite{JC}, for Euclidean spaces.

\begin{proposition}
Let $T:X\tos X^*$ be a multivalued operator. Then $(T^*)_{p+}\subset (T_{p-})^*$ and $(T^*)_{p-}\subset (T_{p+})^*$.
\end{proposition}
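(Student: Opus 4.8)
The plan is to prove the first inclusion $(T^*)_{p+}\subset (T_{p-})^*$ directly from the definitions, and then obtain the second one, $(T^*)_{p-}\subset (T_{p+})^*$, by the symmetric argument, interchanging the roles of the forward and backward shifts throughout. To prove the first inclusion, I would fix $(\bar a,\bar a^*)\in (T^*)_{p+}$ and verify the defining relation of $(T_{p-})^*$, namely $\inner{\bar a}{\bar y^*}_p=\inner{\bar y}{\bar a^*}_p$ for every $(\bar y,\bar y^*)\in T_{p-}$. Applying the equivalent description of the $p\pm$ construction to the operator $T^*$, the membership $(\bar a,\bar a^*)\in(T^*)_{p+}$ supplies elements $u_i^*,v_i^*\in T^*(a_i)$ with $a_i^*=u_i^*-v_{i+1}^*$ for each $i$, the index being read cyclically ($v_{p+1}^*=v_0^*$, corresponding to $a_{p+1}=a_0$). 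Likewise, an arbitrary $(\bar y,\bar y^*)\in T_{p-}$ provides $p_i^*,q_i^*\in T(y_i)$ with $y_i^*=p_i^*-q_{i-1}^*$ (cyclically, $q_{-1}^*=q_p^*$, $y_{-1}=y_p$); in particular each $y_i\in\dom(T)$, since $\dom(T_{p-})=\prod_{i=0}^p\dom(T)$, so these elements exist.

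The computation then proceeds by expanding $\inner{\bar a}{\bar y^*}_p=\sum_{i=0}^p\inner{a_i}{p_i^*}-\sum_{i=0}^p\inner{a_i}{q_{i-1}^*}$ and converting each $T$-pairing into a $T^*$-pairing through the defining relation \eqref{eq:adjoint} of the adjoint. Concretely, since $(a_i,u_i^*)\in T^*$ and $(y_i,p_i^*)\in T$, that relation yields $\inner{a_i}{p_i^*}=\inner{y_i}{u_i^*}$; and since $(a_i,v_i^*)\in T^*$ and $(y_{i-1},q_{i-1}^*)\in T$, it yields $\inner{a_i}{q_{i-1}^*}=\inner{y_{i-1}}{v_i^*}$. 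Hence $\inner{\bar a}{\bar y^*}_p=\sum_{i=0}^p\inner{y_i}{u_i^*}-\sum_{i=0}^p\inner{y_{i-1}}{v_i^*}$. The crux is the observation that, because the backward shift $i\mapsto i-1$ is a cyclic permutation of $\{0,\dots,p\}$, one has $\sum_{i=0}^p\inner{y_{i-1}}{v_i^*}=\sum_{i=0}^p\inner{y_i}{v_{i+1}^*}$ (using $y_{-1}=y_p$ and $v_{p+1}^*=v_0^*$). Substituting this back gives $\inner{\bar a}{\bar y^*}_p=\sum_{i=0}^p\inner{y_i}{u_i^*-v_{i+1}^*}=\sum_{i=0}^p\inner{y_i}{a_i^*}=\inner{\bar y}{\bar a^*}_p$, which is exactly the condition for $(\bar a,\bar a^*)\in(T_{p-})^*$.

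I expect the only delicate point to be the index bookkeeping around the cyclic wrap-around: one must keep the two families $\{u_i^*\}$ and $\{v_i^*\}$ distinct and deliberately convert the first sum via the $u$'s and the second via the $v$'s, so that after the cyclic reindexing the two sums recombine precisely into $\inner{y_i}{u_i^*-v_{i+1}^*}$. No monotonicity or reflexivity is needed at this step, only the bilinearity of the coupling $\inner{\cdot}{\cdot}_p$ and the defining property of the adjoint, so the argument is purely algebraic; the symmetric inclusion $(T^*)_{p-}\subset(T_{p+})^*$ then follows by exchanging the forward and backward shifts in the identical computation.
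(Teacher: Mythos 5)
Your proposal is correct and follows essentially the same route as the paper's own proof: expand $\inner{\bar a}{\bar y^*}_p$, convert each pairing via the defining relation of the adjoint, reindex one sum cyclically, and recombine to get $\inner{\bar y}{\bar a^*}_p$, with the second inclusion obtained by the symmetric argument. The only differences are notational (your $u_i^*,v_i^*,p_i^*,q_i^*$ play the roles of the paper's $z_i^*,w_i^*,u_i^*,v_i^*$).
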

\begin{proof}
Let $(\bar x,\bar x^*)\in (T^*)_{p+}$. Then there exist $z_i^*,w_i^*\in T^*(x_i)$ such that $x_i^*=z_i^*-w_{i+1}^*$. Now take any $(\bar y,\bar y^*)\in T_{p-}$, and let $u_i,v_i\in T(y_i)$ such that $y_i^*=u_i^*-v_{i-1}^*$. Therefore
\begin{align*}
\inner{\bar x}{\bar y^*}_p&=\sum_{i=0}^p\inner{x_i}{y_i^*}=\sum_{i=0}^p\inner{x_i}{u_i^*-v^*_{i-1}}=\sum_{i=0}^p\inner{x_i}{u_i^*}-\sum_{i=0}^p\inner{x_i}{v^*_{i-1}}\\
&=\sum_{i=0}^p\inner{y_i}{z_i^*}-\sum_{i=0}^p\inner{y_{i-1}}{w^*_{i}}=\sum_{i=0}^p\inner{y_i}{z_i^*}-\sum_{i=0}^p\inner{y_{i}}{w^*_{i+1}}\\
&=\sum_{i=0}^p\inner{y_i}{z_i^*-w^*_{i+1}}=\sum_{i=0}^p\inner{y_i}{x_i^*}=\inner{\bar y}{\bar x^*}_p,
\end{align*}
and $(\bar x,\bar x^*)\in (T_{p-})^*$.  The inclusion $(T^*)_{p-}\subset (T_{p+})^*$ is completely analogous.
\end{proof}

\begin{theorem}
Let $X$ be a reflexive Banach space and $T:X\tos X^*$ be a linear operator with closed graph and closed domain. Then the following are equivalent:
\begin{enumerate}
 \item $T$ is maximal $p$-monotone.
 \item $T^*$ is maximal $p$-monotone.
 \item $T^*$ is $p$-monotone.
\end{enumerate}

\end{theorem}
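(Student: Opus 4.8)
My plan is to transfer the whole statement to the classical Brezis--Browder theorem (Theorem~\ref{teo:bb}) by means of the auxiliary operators $T_{p+}$ and $T_{p-}$, whose monotonicity and maximal monotonicity are already tied to the $p$-monotonicity of $T$ by Propositions~\ref{uno} and~\ref{pro:maxtp}. As preliminaries I would record that, since $X$ is reflexive and $T$ is linear with closed graph, $T^{**}=T$ and $T^*$ is again linear with closed graph. I would also need that $T^*$ meets the hypotheses of the theorem, i.e.\ that $\dom(T^*)$ is closed; here $T(0)$ is closed (it is a slice of the closed graph at $x=0$) and one expects $\dom(T^*)=T(0)^{\bot}$, but verifying this equality genuinely uses the closed-domain hypothesis and is one of the delicate points. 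Finally, by Proposition~\ref{pro:tplinear} the four operators $T_{p\pm}$ and $(T^*)_{p\pm}$ are linear, and I would check that their graphs are closed, so that Theorem~\ref{teo:bb} and Proposition~\ref{pro:maxtp} apply to them.

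The crux of the proof is to upgrade the inclusions of the proposition immediately preceding the theorem, $(T^*)_{p+}\subset(T_{p-})^*$ and $(T^*)_{p-}\subset(T_{p+})^*$, to the equalities $(T^*)_{p+}=(T_{p-})^*$ and $(T^*)_{p-}=(T_{p+})^*$. This is what the closed-graph, closed-domain and reflexivity assumptions are really for, and I expect it to be the main obstacle, since the reverse inclusions do \emph{not} follow formally from the given ones (taking adjoints merely returns the same inclusions). In finite dimensions the equalities are immediate from a dimension count: viewing $T$ as a subspace fibered over $\dom(T)$ with fiber $T(0)$ gives $\dim T_{p\pm}=(p+1)\dim T$, while the adjoint of any subspace $S\subset X_p\times X_p^*$ satisfies $\dim S^*=2n(p+1)-\dim S$; hence both $(T_{p-})^*$ and $(T^*)_{p+}$ have dimension $(p+1)\dim T^*$, and an inclusion between equidimensional subspaces is an equality. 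In the general reflexive setting I would instead obtain the reverse inclusions by a direct computation, using $T=T^{**}$ and the annihilator identity $T^*(0)=\dom(T)^{\bot}$ to discard the freedom in choosing the representatives in $T(x_i)$ and in $T^*(x_i)$.

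With these equalities available, the three conditions close up along the cycle (1)$\Rightarrow$(2)$\Rightarrow$(3)$\Rightarrow$(1). For (1)$\Rightarrow$(2): if $T$ is maximal $p$-monotone then $T_{p-}$ is maximal monotone by Proposition~\ref{pro:maxtp}, hence $(T_{p-})^*$ is maximal monotone by Theorem~\ref{teo:bb}; since $(T_{p-})^*=(T^*)_{p+}$, Proposition~\ref{pro:maxtp} applied to $T^*$ (this is where the closedness of $\dom(T^*)$ enters) yields that $T^*$ is maximal $p$-monotone. The implication (2)$\Rightarrow$(3) is trivial, as maximal $p$-monotonicity implies $p$-monotonicity. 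For (3)$\Rightarrow$(1): if $T^*$ is $p$-monotone then $(T^*)_{p-}$ is monotone by the linear converse of Proposition~\ref{uno}, so $(T_{p+})^*=(T^*)_{p-}$ is monotone; Theorem~\ref{teo:bb} then gives that $T_{p+}$ is maximal monotone, and Proposition~\ref{pro:maxtp} converts this into the maximal $p$-monotonicity of $T$.

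In summary, every step except the equality of the second paragraph is a direct invocation of already-proved results; the real work is the reverse inclusion $(T_{p-})^*\subset(T^*)_{p+}$ (and its twin) together with the verification that the closed-graph and closed-domain hypotheses propagate to $T^*$ and to the product operators $T_{p\pm}$. I would therefore concentrate the proof on these two technical facts and then read off the equivalence from the cycle above.
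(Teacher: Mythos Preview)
Your plan works in principle but takes a harder road than the paper. The paper never upgrades the inclusions $(T^*)_{p+}\subset(T_{p-})^*$ and $(T^*)_{p-}\subset(T_{p+})^*$ to equalities; it uses them only as inclusions, and only in the direction that comes for free. Its cycle is $(1)\Rightarrow(3)\Rightarrow(2)\Rightarrow(1)$. For $(1)\Rightarrow(3)$ it argues exactly as you do up to $(T_{p-})^*$ being maximal monotone, but then simply observes that the \emph{subset} $(T^*)_{p+}$ is therefore monotone, and Proposition~\ref{uno} yields that $T^*$ is $p$-monotone. For $(3)\Rightarrow(2)$ the product operators are abandoned altogether: $T^*$ $p$-monotone implies $T^*$ monotone, the classical Brezis--Browder theorem applied to $T$ gives $T^*$ maximal monotone, and Proposition~\ref{pro:pmonqmon}, item~2 (maximal $1$-monotone plus $p$-monotone implies maximal $p$-monotone) finishes. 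Finally $(2)\Rightarrow(1)$ is just the already-established $(1)\Rightarrow(2)$ applied to $T^*$, together with $T^{**}=T$. Your route would also succeed once the reverse inclusions are in hand, and it has the conceptual neatness of staying at the product level throughout; but notice that your step $(3)\Rightarrow(1)$ genuinely requires the reverse inclusion $(T_{p+})^*\subset(T^*)_{p-}$ (the forward one points the wrong way for pushing monotonicity upward), so the ``main obstacle'' you identify is unavoidable in your scheme, whereas the paper sidesteps it entirely via the shortcut of Proposition~\ref{pro:pmonqmon}.
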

\begin{proof}
First observe that if $T$ is closed, then $T_{\pm}$ is also closed. 
If $T$ is maximal $p$-monotone then, by Proposition~\ref{pro:maxtp}, $T_{p-}$ is maximal monotone. Using Theorem~\ref{teo:bb} we conclude that $(T_{p-})^*$ is maximal monotone. Therefore $(T^*)_{p+}$ is monotone and, by Proposition~\ref{uno}, $T^*$ is $p$-monotone.  If $T^*$ is $p$-monotone then it is monotone and, by Theorem~\ref{teo:bb}, $T^*$ is maximal monotone. We now use Proposition~\ref{pro:pmonqmon}, item {\it 2}, to conclude that $T^*$ is maximal $p$-monotone. Finally, as we already proved {\it 1} $\to$ {\it 2}, if $T^*$ is maximal $p$-monotone then $(T^*)^*=T$ is maximal $p$-monotone.
\end{proof}



\end{document}